\documentclass{amsart}

\usepackage{amsmath,amssymb,amsthm,amsfonts,mathrsfs,mathtools}
\usepackage[frame,cmtip,arrow,matrix,line,graph,curve]{xy}
\usepackage[mathscr]{eucal}
\usepackage{mathabx}
\usepackage[colorlinks,backref=page,citecolor=blue]{hyperref}
\usepackage{tikz}
\usepackage{epic,eepic}
\usepackage{yfonts}
\usepackage{enumerate}
\usepackage{comment}
\usepackage{enumitem}
\usepackage{stmaryrd}
\usepackage{tikz-cd}

\theoremstyle{definition}
\newtheorem{mainthm}{Theorem}
\newtheorem{mainthmm}{Theorem}

\newtheorem{theorem}{Theorem}[section]
\newtheorem{definition}[theorem]{Definition}

\newtheorem{lemma}[theorem]{Lemma}
\newtheorem{proposition}[theorem]{Proposition}
\newtheorem{corollary}[theorem]{Corollary}

\newtheorem{question}[theorem]{Question}
\newtheorem*{theorem*}{Theorem}

\theoremstyle{remark}
\newtheorem{remark}[theorem]{Remark}
\newtheorem{example}[theorem]{Example}

\def\sm{\setminus}
\def\PP{\mathbb{P}}

\def\RR{\mathbb{R}}
\def\CC{\mathbb{C}}
\def\ZZ{\mathbb{Z}}

\def\QQ{\mathbb{Q}}

\def\mcB{\mathcal{B}}

\def\mcF{\mathcal{F}}
\def\mcG{\mathcal{G}}
\def\mcH{\mathcal{H}}

\def\mcP{\mathcal{P}}
\def\mcQ{\mathcal{Q}}
\def\mcS{\mathcal{S}}
\def\umcQ{\underline\mcQ}
\def\umcS{\underline\mcS}

\def\eps{\varepsilon}
\def\uSigma{\underline\Sigma}
\def\uDelta{\underline\Delta}
\def\uPi{\underline\Pi}
\def\uX{\underline X}
\def\da{\mathord{\downarrow}}

\DeclareMathOperator{\cone}{cone}
\DeclareMathOperator{\Span}{span}

\DeclareMathOperator{\CH}{CH}
\DeclareMathOperator{\Nef}{Nef}
\DeclareMathOperator{\MW}{MW}
\DeclareMathOperator{\crk}{crk}
\DeclareMathOperator{\rk}{rk}
\DeclareMathOperator{\supp}{supp}
\DeclareMathOperator{\cl}{cl}
\DeclareMathOperator{\gaps}{gaps}
\DeclareMathOperator{\torstar}{star}
\DeclareMathOperator{\Elem}{Elem}
\DeclareMathOperator{\crem}{crem}

\numberwithin{equation}{section}

\begin{document}

\subjclass[2020]{Primary: 14C15, 14M25 Secondary: 52B40, 14C17.}
\keywords{Tautological classes of matroids, Bergman class, permutohedral variety, nef cone, extremal rays, stellahedral variety.}

\title{Extremality of Tautological Classes of Matroids in Nef Cones}

\author{Colin Stastny}
\address{Fachbereich Mathematik und Statistik, Universität Konstanz, Konstanz, Germany}
\email{Colin.Stastny@uni-konstanz.de}

\begin{abstract}
We study the extremality properties of rays generated by tautological Chern classes of matroids in the polyhedral nef cones of the permutohedral variety. Our main result provides necessary and sufficient conditions a matroid has to meet for its tautological Chern classes to generate extremal rays. Our approach yields a new and large family of extremal rays that interpolates between two previously known classes coming from rank functions and Bergman classes of matroids.
We further discuss how this theorem extends to the stellahedral variety through augmented tautological Chern classes.
\end{abstract}

\vspace{-1cm}
\maketitle

\vspace{-.5cm}

\section{Introduction} \label{sec:introduction}

Over the course of the last decade, it has become evident that many matroid invariants can be studied through the lens of algebraic geometry. In this context, one particular family of toric varieties appears repeatedly, namely the family of \textit{permutohedral varieties}. For a finite set $E$, the permutohedral variety $\uX_E$ is a smooth projective toric variety and, as such, its nef cones $\Nef^k(\uX_E)$ in the Chow ring $\CH^\bullet(\uX_E)$ are pointed and polyhedral. Hence, one can ask for a description of the extremal rays of these cones. This question has been asked in \cite{Huh_2016} and it is generally believed that describing all extremal rays is a difficult task. The high complexity of this problem already becomes clear in the case of the divisor nef cone $\Nef^1(\uX_E)$: 
It was shown in \cite[Cor.~2.5]{Batyrev_Blume_2011} that a divisor $D=\sum_{\emptyset\subsetneq S\subsetneq E}c_Sx_S\in\CH^1(\uX_E)$ is nef if and only if the function $c:2^E\to\QQ,\;S\mapsto~c_S$ with $c_\emptyset=c_E=0$ has the property that 
\begin{align} \label{eq:submodularity}
c_S+c_T\leq c_{S\cup T}+c_{S\cap T} \quad \text{holds for all $S,T\subset E$.}
\end{align}
Such a function $c$ is also called a \textit{submodular function} on $E$. The lineality space of the cone of submodular functions on $E$ consists precisely of those functions that satisfy equality in \eqref{eq:submodularity}, which, by definition, is the space of \textit{modular functions} on $E$. Thus, one has a linear isomorphism that identifies $\Nef^1(\uX_E)$ with the cone of submodular functions modulo modular functions. Describing all extremal submodular functions is a well-known and hard combinatorial problem with many far-reaching applications in various fields such as optimization, game theory and machine learning, see \cite{Balcan_Harvey_2012} and the references made therein. Techniques for generating extremal submodular functions on a small ground set, as well as general facts about the cone of submodular functions (or the closely related cone of supermodular functions) can be found in \cite{Studeny_2016}, \cite{Studeny_Bouckaert_Kocka_2000}, \cite{csirmaz_2024} and \cite{Loho_Padrol_Poullot_2025}.

At the other end, there is the nef cone of curves $\Nef^{n-1}(\uX_E)$, where $n=\dim(\uX_E)=|E|-1$. We will see in more detail in Example~\ref{exp:nef_1_and_n-1} that determining its extremal rays is equivalent to finding \textit{minimal balanced collections} on $E$. Again, this is another hard combinatorial problem and no explicit classification for minimal balanced collections is known.
Methods for generating minimal balanced collections have been studied in \cite{MGS_2023} and bounds on the number of minimal balanced collections were achieved in \cite{Bludov_Zuev_2025}.

Both of these instances suggest that characterizing all extremal rays of $\Nef^k(\uX_E)$ should also be hard for choices of $k$ other than $1$ and $n-1$. A natural approach to this problem is thus to first construct certain subsets of the set of all extremal rays. Two results of this type have been achieved in the past by invoking the theory of \textit{matroids}:
\begin{itemize}
    \item In \cite[Thm.~2.1.5]{Nguyen_1978}, it was shown that a matroid $M$ on $E$ is connected up to loops and coloops if and only if its rank function $\rk_M$ is an extremal submodular function on the set $E$. Under the correspondence mentioned above, this is equivalent to $\rk_M$ inducing an extremal ray of $\Nef^1(\uX_E)$.
    \item It was shown in \cite{Huh_2016} that the Bergman class of a loopless matroid $M$ on $E$ generates an extremal ray of $\Nef^{\crk(M)}(\uX_E)$.
\end{itemize}
We will see in Example~\ref{exp:c_1} and~\ref{exp:c_top} that the machinery of \textit{tautological Chern classes of matroids}, developed in \cite{Berget_Eur_Spink_Tseng_2023}, captures both of these constructions: Let $[\umcS_M]$ (resp. $[\umcQ_M]$) denote the tautological sub (resp. quotient) equivariant $K$-class of a matroid $M$ on $E$, let $[\umcS_M^\vee]$ (resp. $[\umcQ_M^\vee]$) be its dual $K$-class, and write $c_k(\cdot)$ for the $k$-th Chern class. Then, on the one hand, the extremal ray induced by the submodular function $\rk_M$ in $\Nef^1(\uX_E)$ is generated by $c_1(\umcQ_M)$ and, on the other hand, the Bergman class of $M$ is given by $c_{\crk(M)}(\umcQ_M)$, see \cite[Thm.~7.6]{Berget_Eur_Spink_Tseng_2023}. Since $c_k(\umcS^\vee_M)$ and $c_k(\umcQ_M)$ are always nef by Proposition~\ref{prop:Comb_Descr_MW_of_taut_classes}, this raises the question of whether tautological Chern classes of matroids could interpolate between the two known classes of extremal rays from above. In other words:
\begin{center}
    \textit{Under suitable assumptions on a matroid $M$ on $E$, are the rays generated by the\\
    tautological Chern classes $c_k(\umcS^\vee)$ and $c_k(\umcQ)$ extremal in $\Nef^k(\uX_E)$?}
\end{center}
As the first main result of this article, we will answer this question in the affirmative by showing that connectedness up to loops and coloops is a necessary and sufficient condition for the tautological Chern classes to be extremal:

\begin{mainthm} \label{thm:main_result_intro}
Let $M$ be a matroid on $E$, let $l$ be the number of loops and $l'$ be the number of coloops of $M$. Then, for any integers $k$ and $k'$ with $1\leq k<\crk(M)-l$ and $1\leq k'<\rk(M)-l'$, the following are equivalent:
\begin{enumerate}
    \item $M$ is connected up to loops and coloops, \label{eq:main_result_1_intro}
    \item $c_k(\umcQ_M)$ generates an extremal ray of $\Nef^k(\uX_E)$, \label{eq:main_result_2_intro}
    \item $c_{k'}(\umcS_M^\vee)$ generates an extremal ray of $\Nef^{k'}(\uX_E)$. \label{eq:main_result_3_intro}
\end{enumerate}
\end{mainthm}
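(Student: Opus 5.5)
We reduce (3) to (2) by matroid duality, and then prove (1)$\Leftrightarrow$(2) by working with Minkowski weights.

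\textbf{Reduction to (1)$\Leftrightarrow$(2).} By \cite{Berget_Eur_Spink_Tseng_2023}, $[\umcS_M^\vee]$ and $[\umcQ_{M^\perp}]$ are related through the Cremona involution of $\uX_E$. This involution is an automorphism of $\uX_E$, so it preserves every $\Nef^k(\uX_E)$ and maps extremal rays to extremal rays. Up to a sign convention, then, $c_{k'}(\umcS_M^\vee)$ corresponds to $c_{k'}(\umcQ_{M^\perp})$. Under dualization $\crk(M^\perp)=\rk(M)$, loops and coloops are exchanged, and connectedness up to loops and coloops is preserved. Hence (1)$\Leftrightarrow$(3) follows from (1)$\Leftrightarrow$(2) applied to $M^\perp$.

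\textbf{(2)$\Rightarrow$(1).} Suppose $M$, after deleting loops and coloops, decomposes as $M_1\oplus M_2$ with both summands nontrivial. Loops and coloops contribute trivially to $\umcQ$, so the Whitney sum formula gives
\[
c(\umcQ_M)=c(\umcQ_{M_1'})\,c(\umcQ_{M_2'}),
\]
where $M_i'$ denotes $M_i$ extended by loops to all of $E$. Therefore
\[
c_k(\umcQ_M)=\sum_{i+j=k}c_i(\umcQ_{M_1'})\,c_j(\umcQ_{M_2'}).
\]
Each summand is a product of nef classes and is therefore nef. Because $1\le k<\crk(M)-l$, this sum contains at least two nonzero terms. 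I would show that two such terms are not proportional, for instance by comparing supports of their Minkowski weights on a suitable flag of cones from Proposition~\ref{prop:Comb_Descr_MW_of_taut_classes}. It follows that $c_k(\umcQ_M)$ is not extremal. The strict upper bound on $k$ is what makes this argument work: at $k=\crk(M)-l$ the class is the Bergman class, which is extremal by Huh's result regardless of connectedness.

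\textbf{(1)$\Rightarrow$(2).} We may delete loops and coloops, since the class is pulled back along the corresponding map of permutohedral varieties, and then assume $M$ is connected. By Proposition~\ref{prop:Comb_Descr_MW_of_taut_classes}, the Minkowski weight $w$ of $c_k(\umcQ_M)$ on a $k$-codimensional cone indexed by a flag $\mathcal{F}$ is a nonnegative combinatorial count determined by ranks along $\mathcal{F}$. Now suppose $w=w_1+w_2$ with $w_1,w_2$ nef. We must show that each $w_i$ is proportional to $w$, in two steps.

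\emph{Support.} First show that $w_i$ vanishes wherever $w$ vanishes. Nef weights are nonnegative on the cones, and pairing $w$ with products of nef divisors (the rays) gives nonnegativity constraints. Together these should force zeros to propagate.

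\emph{Rigidity.} On the support of $w$, use the balancing conditions at codimension-$(k+1)$ cones together with nefness, in the form of local submodularity inequalities obtained by restricting to torus-invariant strata. These show that ratios $w_i/w$ are constant across adjacent cones. Connectivity of $M$ ensures that the adjacency graph on the support is connected, in the same way Nguyen's argument uses connectivity to propagate equalities between rank values. The cases $k=1$ (Nguyen) and the Bergman case serve as models.

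\textbf{Main obstacle.} The hardest step is the rigidity step: extracting enough linear equalities from nefness of an arbitrary decomposition. The first approach to try is to restrict to torus-invariant subvarieties isomorphic to products of smaller permutohedral varieties. There the classes become tautological classes of minors, which allows induction on $|E|$, with Nguyen's theorem as the base case in degree $1$. Connectedness of a suitable minor, for example a contraction by a flat, is what glues these local proportionalities into a global one.
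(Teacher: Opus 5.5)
Your reduction of (3) to (2) via the Cremona involution, and your argument for (2)$\Rightarrow$(1), agree with the paper. The gap is in the rigidity step of (1)$\Rightarrow$(2).

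First, the support step is immediate: $w_1,w_2\geq0$ and $w_1+w_2=w$ force $\supp w_i\subset\supp w$. Since $w$ equals $1$ on $\uSigma_M^k(0)$, Proposition~\ref{prop:char_nef_rays} shows it suffices to prove that every weight in $\MW^k(\uSigma_M^k)$ is constant. Nefness gives nothing beyond this: your ``local submodularity inequalities'' from restricting to strata are just nonnegativity of the same weight values.

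Your mechanism for constancy does not work. You claim that balancing at codimension-$(k+1)$ cones makes the weight constant across adjacent maximal cones, and that connectivity of $M$ makes the adjacency graph connected. But the maximal cones around $\sigma_\mcF$ are the $\mcF\cup S$ with $S$ inside some gap of $\mcF$. Balancing only equates values for $S,S'$ lying in the \emph{same} gap (Proposition~\ref{prop:bal_cond_implies_loc_const}).

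Example~\ref{exp:bal_cond_counterex} shows this concretely. Take the connected matroid on $\{0,1,2,3\}$ whose bases are all $2$-sets except $\{0,1\}$, and $\mcF=(\{0,1\})$. Balancing gives $\Delta(\{0\}\subsetneq\{0,1\})=\Delta(\{1\}\subsetneq\{0,1\})$ and $\Delta(\{0,1\}\subsetneq\{0,1,2\})=\Delta(\{0,1\}\subsetneq\{0,1,3\})$. Nothing relates the two pairs, and nonnegativity imposes nothing further there. So connectedness in codimension one of $\uSigma_M^k$, which does hold, is not sufficient, and Nguyen-style propagation does not transfer.

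The missing idea is the finer relation $\sim$ of Definition~\ref{def:equivalence_relation}. You need the theorem that $\uSigma_M^k(0)$ is a single $\sim$-class when $M$ is connected and $1\leq k<\crk(M)$ (Proposition~\ref{prop:connectedness}); essentially all the work lies here. The paper proves it by induction on $|E|$ using:
\begin{enumerate}
\item the moves $\da_e$, which push $e$ into the first set of a flag inside its parallel class;
\item moves that relocate a parallel element to any gap;
\item compatibility of $\sim$ with contracting a parallel class $P=G_1$ and with deleting $e$ when $G_{n-k}=E\sm\{e\}$ (Lemmas~\ref{lem:SuppOfContr} and~\ref{lem:SuppOfDel});
\item Wu's lemma, to obtain three contractible parallel classes;
\item a separate treatment of $k=\crk(M)-1$.
\end{enumerate}

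Your fallback of restricting to torus-invariant strata does not replace this. The restriction of $c_k(\umcQ_M)$ to $V(\sigma_S)\cong\uX_S\times\uX_{E\sm S}$ is $\sum_i c_i(\umcQ_{M|S})\boxtimes c_{k-i}(\umcQ_{M/S})$. This generally has several nonzero terms and is not extremal. So induction on extremality only applies on special stars such as $S=\{e\}$ or $S=E\sm\{e\}$, and gluing the resulting constants again needs the gap-by-gap moves. The paper uses such restrictions only to transport $\sim$ from minors, not extremality.

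Finally, removing loops and coloops in (1)$\Rightarrow$(2) needs more than the class being a pullback. You need that $f^*\Nef^k(\uX_{E'})$ is a face of $\Nef^k(\uX_E)$ (Lemma~\ref{lem:pullback_subspace}, Corollary~\ref{cor:lifting_property_of_nef}).
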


Note that, since the first condition of the theorem is independent of $k$ and $k'$, connectedness up to loops and coloops implies that all appearing tautological sub and quotient Chern classes generate extremal rays in their respective nef cones. Thus, if one of the above tautological Chern classes generates an extremal ray, then all of them do.

Another closely related toric variety that is used to study matroidal invariants through intersection theory is the \textit{stellahedral variety} $X_E$. In \cite{Eur_Huh_Larson_2023}, the authors constructed the augmented Bergman class and augmented tautological sub (resp. quotient) $K$-class $[\mcS_M]$ (resp. $[\mcQ_M]$) of a matroid $M$, which extend the corresponding constructions made on the permutohedral variety to the stellahedral variety. These $K$-classes come with induced augmented tautological Chern classes which naturally leads to the question of whether Theorem~\ref{thm:main_result_intro} can be extended to obtain extremal rays of the nef cone of the stellahedral variety. While augmented tautological sub Chern classes are rarely even nef or anti-nef by \cite[Prop.~5.9]{Eur_Huh_Larson_2023}, we will see in Chapter~\ref{sec:main_result_stella} that the equivalences from Theorem~\ref{thm:main_result_intro} can indeed be related to the extremality of augmented tautological quotient Chern classes:

\begin{mainthm} \label{thm:main_result_stella_intro}
If $M$ is loopless, the three conditions from Theorem~\ref{thm:main_result_intro} are equivalent to
\begin{enumerate} [start = 4]
    \item $c_k(\mcQ_M)$ generates an extremal ray of $\Nef^k(X_E)$.
\end{enumerate}
\end{mainthm}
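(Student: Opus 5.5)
We reduce Theorem~\ref{thm:main_result_stella_intro} to Theorem~\ref{thm:main_result_intro} by comparing the stellahedral and permutohedral varieties. Recall that $X_E$ admits a toric morphism relating it to $\uX_{E\cup\{0\}}$: the stellahedral fan is a coarsening of the permutohedral fan on $E\cup\{0\}$, so there is a birational toric map $\pi\colon \uX_{E\cup\{0\}}\to X_E$. Pullback $\pi^*$ sends nef classes to nef classes, and $\pi^*$ is injective on Chow rings. By \cite{Eur_Huh_Larson_2023}, the augmented tautological quotient class of $M$ pulls back to the non-augmented tautological quotient class of the free coextension (or an appropriate lift) $\widetilde M$ of $M$ on $E\cup\{0\}$. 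Concretely, we expect $\pi^*c_k(\mcQ_M)=c_k(\umcQ_{\widetilde M})$ with $\crk(\widetilde M)=\crk(M)$ up to the loop count.

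\textbf{Step 1} (identification). Establish the pullback identity for $K$-classes, $\pi^*[\mcQ_M]=[\umcQ_{\widetilde M}]$, and hence for Chern classes. Then check that $\widetilde M$ is connected up to loops and coloops exactly when $M$ is. The hypothesis that $M$ is loopless is used here: adding $0$ in free position connects all non-coloop components, and coloops of $M$ stay coloops. We also need the numerical range $1\le k<\crk(M)-l$ to match the corresponding range for $\widetilde M$.

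\textbf{Step 2} ((1)$\Rightarrow$(4)). Suppose $c_k(\mcQ_M)=A+B$ with $A,B$ nef on $X_E$. Pulling back gives $c_k(\umcQ_{\widetilde M})=\pi^*A+\pi^*B$ with both summands nef on $\uX_{E\cup\{0\}}$. By Theorem~\ref{thm:main_result_intro} applied to $\widetilde M$, both $\pi^*A$ and $\pi^*B$ are proportional to $c_k(\umcQ_{\widetilde M})$. Injectivity of $\pi^*$ then gives $A,B\in\RR_{\ge0}c_k(\mcQ_M)$.

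\textbf{Step 3} ((4)$\Rightarrow$(1)). Argue by contrapositive. If $M$ is not connected up to loops and coloops, write $M=M_1\oplus M_2$ with both summands having nontrivial non-coloop parts. Tautological classes are multiplicative: $c(\mcQ_M)=c(\mcQ_{M_1}')\,c(\mcQ_{M_2}')$ for suitable augmented or pulled-back pieces. We then mimic the decomposition used in the proof of Theorem~\ref{thm:main_result_intro}: $c_k(\mcQ_M)=\sum_{i+j=k}c_i(\cdot)c_j(\cdot)$ is a sum of nef classes, and for $k$ in the allowed range at least two of these summands are non-proportional and nonzero. Non-proportionality is verified by restricting to a torus-fixed curve or by using the Minkowski-weight description from Proposition~\ref{prop:Comb_Descr_MW_of_taut_classes}, adapted to the stellahedral setting via its cone description.

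\textbf{Main obstacle.} The key difficulty is Step 1: making the pullback identity precise and verifying that nefness is detected compatibly, since the stellahedral fan is not literally a coarsening in all conventions. If the identity fails, the fallback is to work directly with Minkowski weights on the stellahedral fan and rerun the extremality argument for Theorem~\ref{thm:main_result_intro} there. In that case the fan-theoretic characterization of nefness would come from cone-wise convexity on stellahedral cones.
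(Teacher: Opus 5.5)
There is a genuine gap in Steps~1--2, and choosing a different lift does not repair it. The identity $\pi^*c_k(\mcQ_M)=c_k(\umcQ_{\widetilde M})$ fails in the range $1\leq k<\crk(M)$ that the theorem is about.

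Your own remark already shows the problem for the free coextension. Dually adding $0$ in free position puts every non-coloop of $M$ in a circuit with $0$. So $\widetilde M$ is connected up to coloops \emph{whether or not} $M$ is. Steps~1--2 would then prove that $c_k(\mcQ_M)$ is extremal for every loopless $M$, and that is false. Take $M=U_{1,2}\oplus U_{1,2}$ and $k=1$: the decomposition from your Step~3 writes $c_1(\mcQ_M)$ as a sum of two non-proportional nonzero nef divisors pulled back from the two factors.

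Nor does any other lift work. For $M=U_{1,3}$ and $k=1$, Proposition~\ref{prop:Comb_Descr_MW_of_taut_classes_stella} shows that the Minkowski weight of $c_1(\mcQ_M)$ is the edge-length weight of the polytope $\{x\in[0,1]^3\mid x_1+x_2+x_3\leq 2\}$. Hence $\pi^*c_1(\mcQ_M)$ is the divisor of this polytope viewed as a generalized permutohedron on $E\cup\{0\}$, and it has width $2$ in the new coordinate. By contrast, $c_1(\umcQ_{\widetilde M})$ corresponds to a $0/1$ matroid base polytope, which has width at most $1$ in every coordinate.

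The missing idea is to compare $X_E$ with $\uX_E$ rather than with $\uX_{E\cup\{0\}}$. The comparison goes through the divisor $\iota_E\colon\uX_E\hookrightarrow X_E$, which is the star of the ray $\emptyset\leq(\emptyset)$, and Theorem~\ref{thm:main_result_intro} is then applied to $M$ itself. The paper's argument runs as follows.

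It starts from the explicit support $\Sigma_M^k(0)$ given by Proposition~\ref{prop:Comb_Descr_MW_of_taut_classes_stella}. At each codimension-one cone $I\leq\mcF$ it computes the adjacent maximal cones, namely the sets $S_M^k(I\leq\mcF)$ and $E_M^k(I\leq\mcF)$. From the balancing condition it deduces that every weight in $\MW^k(\Sigma_M^k)$ is constant on the classes of an equivalence relation $\approx$. It then shows that every maximal cone $J\leq\mcG$ of the support is $\approx$-equivalent to one of the form $\emptyset\leq(\emptyset\subsetneq\mcG')$, by removing the elements of $J$ one at a time. These cones are exactly the cones of $\uSigma_M^k(0)$. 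So a weight on $\Sigma_M^k$ is determined by its restriction to $\uSigma_M^k$, and it is nonnegative if and only if that restriction is. This reduces the ray question to the permutohedral case.

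Your fallback mentions Minkowski weights but supplies none of these ingredients. Also, in codimension $k>1$ nefness is nonnegativity of the Minkowski weight, not a convexity condition.

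Your Step~3 is a reasonable outline for (4)$\Rightarrow$(1). It parallels the paper's permutohedral argument, and coloops contribute nothing because $\mcQ$ of a coloop has rank $0$. You would still need to check that the summands are nonzero and non-proportional.
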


We will further see that, for matroids that contain loops, the class $c_k(\mcQ_M)$ cannot be extremal. Our approach for proving this theorem will additionally allow us to deduce the extremality of the ray generated by the augmented Bergman class of any loopless matroid $M$ in $\Nef^{\crk(M)}(X_E)$.

\subsection*{Acknowledgements}
I would like to thank my advisor, Mateusz~Micha\l{}ek, for supervising this project, as well as for his incredible support during my Master’s program. Further, I want to thank Matt~Larson who hinted me towards tautological Chern classes of matroids in the first place.

\section{Preliminaries} \label{sec:preliminaries}

\subsection*{The permutohedral variety}
In this paper, we expect the reader to be familiar with the basic theory of toric varieties. For a comprehensive treatment of toric varieties and notational questions, consult \cite{Cox_Little_Schenck_2011}. A compact introduction to toric varieties can be found in \cite[Ch.~8]{Michalek_Sturmfels_2021}.

Let $E$ be a finite set, $n=|E|-1$ and write $[n]\coloneqq\{0,\dots,n\}$. Let further $(\mathbf{e}_i)_{i\in E}$ be the canonical basis of the lattice $\ZZ^E$. For $S\subset E$, we put
\begin{align*}
\mathbf{e}_S\coloneqq\sum_{i\in S}\mathbf{e}_i\in\ZZ^E.
\end{align*}
Let us fix any labeling $\lambda:[n]\to E$ and, by abuse of notation, write $\mathbf{e}_i$ for $\mathbf{e}_{\lambda(i)}$. Having this, the \textit{permutohedron} is the lattice polytope in $\RR^E$ given by
\begin{align*}
    \uPi_E'\coloneqq\operatorname{conv}\bigl( \sigma(0)\mathbf{e}_{0}+\dots+\sigma(n)\mathbf{e}_{n} \,\big|\, \sigma\in\mathfrak S_{[n]}\bigr).
\end{align*}
Note that the resulting polytope $\uPi_E'$ is independent of the chosen labeling $\lambda$.
Alternatively, the permutohedron can be expressed as the Minkowski sum
\begin{align} \label{eq:Perm_as_hypersimplices}
    \uPi_E'=\sum_{i=1}^n\underline\Delta_{E,i},\quad\text{where }\underline\Delta_{E,i}\coloneqq\{x\in[0,1]^E\mid x_0+\cdots+x_n=i\}
\end{align}
is the $i$-th hypersimplex. It is contained in the hyperplane $H\subset\RR^E$ consisting of those points whose coordinates sum to $\sum_{i=0}^ni=\frac{n(n+1)}{2}$. Hence, one can consider the translated polytope $\uPi_E\coloneqq\uPi_E'-\frac{n(n+1)}{2}\mathbf{e}_n\subset \RR^E$, which is also called permutohedron, as a full-dimensional lattice polytope in $\ZZ^E\cap\Span(\mathbf{e}_E)^\perp=\Span_\ZZ(\mathbf{e}_0-\mathbf{e}_n,\dots,\mathbf{e}_{n-1}-\mathbf{e}_n)\cong\ZZ^n$. Here, the orthogonal complement is taken with respect to the standard inner product.
The lattice polytope $\uPi_E$ induces a rational normal fan $\uSigma_E\coloneqq\Sigma_{\uPi_E}$ in the dual lattice $\ZZ^E/\Span_\ZZ(\mathbf{e}_E)$ which is called the \textit{permutohedral fan}. The geometry of this fan can be understood through \textit{flags}, where, by a flag of length $d$ on $E$, we mean a strictly increasing chain $\bigl(F_1\subsetneq\cdots\subsetneq F_d\bigr)$ consisting of subsets of $E$.

\begin{proposition} \label{prop:geometry_of_perm_fan}
    The set $\uSigma_E(k)$ of $k$-codimensional cones of $\uSigma_E$ is in bijection with the set of flags of length $n-k$ consisting of non-empty proper subsets of $E$ via
    \begin{align*}
        \mcF=\bigl(F_1\subsetneq\cdots\subsetneq F_{n-k}\bigr) \mapsto \sigma_\mcF\coloneqq\cone(\overline{\mathbf{e}_{F_1}},\dots,\overline{\mathbf{e}_{F_{n-k}}}).
    \end{align*}
    In particular, the rays of $\uSigma_E$ bijectively correspond to non-empty proper subsets of $E$. The maximal cones of $\uSigma_E$ are in bijection with permutations on $E$, since every full flag on $E$ can, after relabeling, uniquely be written as $\sigma([0])\subsetneq\sigma([1])\subsetneq\cdots\subsetneq\sigma([n])$ for some $\sigma\in\mathfrak S_{[n]}$. Moreover, we have $\sigma_\mcF\prec\sigma_\mcG$, i.e., $\sigma_\mcF$ is a face of $\sigma_\mcG$, if and only if $\mcG$ contains every set from $\mcF$. In this case, we also write $\mcF\prec\mcG$.
\end{proposition}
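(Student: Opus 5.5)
The plan is to compute the normal fan of $\uPi_E$ directly. Since $\uPi_E$ is a translate of $\uPi_E'$, the two polytopes have the same normal fan, so we may work with $\uPi_E'=\sum_{i=1}^n\uDelta_{E,i}$ via \eqref{eq:Perm_as_hypersimplices}. The normal fan of a Minkowski sum is the common refinement of the normal fans of the summands. So the first step is to describe the normal fan of a single hypersimplex $\uDelta_{E,i}$ in $\RR^E/\RR\mathbf{e}_E$.

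For a weight vector $w\in\RR^E$, the face of $\uDelta_{E,i}$ maximizing $\langle w,\cdot\rangle$ is determined by which coordinates lie strictly above, tied at, or strictly below the $i$-th largest value of $w$. Refining over all $i=1,\dots,n$, the face of $\uPi_E'$ selected by $w$ depends exactly on the ordered set partition of $E$ given by the level sets of $w$, ordered by decreasing value. Equivalently, it depends on the flag of upper sets $F_1\subsetneq\cdots\subsetneq F_d$, where $F_j$ is the union of the $j$ highest level sets; the full set $E$ is discarded.

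The cone of all $w$ (modulo $\mathbf{e}_E$) with a given flag $\mcF$ of upper sets has closure
\[
\{w : w \text{ is constant on each } F_j\setminus F_{j-1},\ \text{with values weakly decreasing in } j\}.
\]
Modulo $\mathbf{e}_E$, such a $w$ is exactly a nonnegative combination $\sum_j a_j\mathbf{e}_{F_j}$ with $a_j\geq 0$. Hence this cone is $\sigma_\mcF$. The vectors $\overline{\mathbf{e}_{F_1}},\dots,\overline{\mathbf{e}_{F_d}}$ are linearly independent in $\ZZ^E/\ZZ\mathbf{e}_E$ because the $F_j$ are strictly nested and proper. Therefore $\dim\sigma_\mcF=d$, i.e.\ its codimension is $n-d$, and flags of length $n-k$ give exactly the codimension-$k$ cones.

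Distinct flags give distinct cones, since $\mcF$ is recovered as the set of rays of $\sigma_\mcF$. The rays themselves are the flags of length one, i.e.\ the non-empty proper subsets $S\subsetneq E$. Maximal cones have $d=n$, which forces $|F_j|=j$. Listing the elements in the order they enter gives a permutation $\sigma$ with $F_j=\sigma([j-1])$; this is the same indexing as in the statement up to the convention of which index is dropped, since $\sigma([n])=E$.

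For the face relation, the faces of the simplicial cone $\sigma_\mcG$ are exactly the cones generated by subsets of its rays. So $\sigma_\mcF\prec\sigma_\mcG$ if and only if every $\overline{\mathbf{e}_{F}}$ with $F\in\mcF$ is a ray of $\sigma_\mcG$, i.e.\ $\mcF\subset\mcG$.

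The main obstacle is checking carefully that the common refinement of the hypersimplex fans is exactly the fan of level-set flags. One must show that knowing, for every $i$, which face of $\uDelta_{E,i}$ is selected determines the full ordered partition, and conversely. A cleaner alternative is to avoid hypersimplices and argue directly on the vertices $\sum\sigma(j)\mathbf{e}_j$. By the rearrangement inequality, $w$ is maximized at exactly those vertices whose coordinates are ordered compatibly with $w$. The face selected by $w$ is then the convex hull of permutations that are constant on the order type of $w$, and this again depends only on the flag of upper sets. I would use this rearrangement argument as the primary route.
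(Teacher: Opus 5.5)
The paper gives no proof of this proposition; it is recalled as the standard description of the permutohedral (braid) fan and used only as background. Your argument is a correct, self-contained verification, and the rearrangement-inequality route you choose is the cleaner one. The steps are:
\begin{itemize}
\item The inequality identifies the face of $\uPi_E'$ selected by $w$ with the ordered partition of $E$ into level sets of $w$.
\item The closure of each such cell is $\{w \text{ constant on the blocks, weakly decreasing}\}$, which modulo $\mathbf{e}_E$ is $\sigma_\mcF$ for the flag $\mcF$ of upper sets.
\item Linear independence of the nested vectors $\overline{\mathbf{e}_{F_j}}$ then gives simpliciality, the codimension count, the bijection with rays, the maximal cones and the face relation.
\end{itemize}
Compared with citing the literature, this buys a proof from first principles. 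The hypersimplex route would additionally explain the coarsening $\uX_E\to\PP^n$ used later in the paper, but it is not needed here.

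There is one point you should state explicitly. In the rearrangement route you only say that the maximizing face \emph{depends only on} the flag of upper sets. You also need the converse: distinct flags give distinct faces. This is easy to check. Two elements lie in the same level set iff some two maximizing vertices order them oppositely, and the order of the blocks is read off from the values their elements receive. Without this injectivity, a normal cone could be the closure of a union of several cells, and the normal fan would be coarser than the collection of the $\sigma_\mcF$.

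A minor remark: you use maximizing (outer) normals, while the toric convention uses inner normals. This is harmless, because the collection of cones $\sigma_\mcF$ is invariant under $w\mapsto -w$. Indeed, $-\overline{\mathbf{e}_F}=\overline{\mathbf{e}_{E\sm F}}$, and complementing a flag reverses it.
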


Since the permutohedral fan $\uSigma_E$ is complete and smooth, it gives rise to a complete smooth toric variety $\uX_E\coloneqq X_{\uSigma_E}$ with dense open torus $(\CC^*)^E/\CC^*$, where $\CC^*$ acts diagonally on $(\CC^*)^E$. The variety $\uX_E$ is called the \textit{permutohedral variety}. It will be the central geometric object that we study in this article. 

\subsection*{The Chow ring of the permutohedral variety}
An important algebraic invariant that formalizes intersection theory on a smooth quasi-projective variety $X$ is its \textit{Chow ring}. If $X$ is equi-dimensional, this ring is graded by codimension and we denote it by $\CH^\bullet(X)$. Here, we will always work with coefficients from $\QQ$, i.e., cycles are $\QQ$-linear combinations of classes of subvarieties. An introduction to Chow rings of schemes can be found in \cite[Ch.~1.2]{Eisenbud_Harris_2016}.
For toric varieties, there are multiple combinatorial descriptions known for their Chow rings: If $\Sigma$ is a $n$-dimensional complete smooth fan, the Chow ring $\CH^\bullet(X_\Sigma)$ is, as a graded ring, isomorphic to the following:
\begin{enumerate}
    \item The Chow ring of the fan $\Sigma$, defined by
    \begin{align*}
    \CH^\bullet(\Sigma) \coloneqq \QQ[x_\rho\mid\rho\in\Sigma(n-1)]/(\mathscr I_\Sigma+\mathscr J_\Sigma),
    \end{align*}
    where $\mathscr I_\Sigma$ is the Stanley-Reisner ideal of the simplicial complex defined by $\Sigma$ and $\mathscr J_\Sigma$ is generated by certain linear forms. For reference, see \cite[Ch.~12.4]{Cox_Little_Schenck_2011}.
    
    \item The quotient $\CH_T^\bullet(\uX_E)/\ker(\varphi)$, where $\CH_T^\bullet(\uX_E)$ is the $T$-equivariant Chow ring of $\uX_E$ defined as in \cite{Edidin_Graham_1997} that comes with a surjection $\varphi:\CH_T^\bullet(\uX_E)\to\CH^\bullet(\uX_E)$. By \cite{Payne_2006}, this quotient can be identified with the ring of piecewise polynomial functions on $\Sigma$ modulo the ideal generated by global polynomials that vanish at the origin. If the cones from $\Sigma$ are pointed, the isomorphism in degree one sends a piecewise linear function $l$ on $\Sigma$ to $\sum_{\rho\in\Sigma(n-1)}l(\mathbf{e}_\rho)x_\rho$, where $\mathbf{e}_\rho$ is the primitive ray generator of $\rho$.

    \item The ring of Minkowski weights $\MW^\bullet(\Sigma)\coloneqq\bigoplus_{k=0}^n\MW^k(\Sigma)$. Here, the space of (rational) Minkowski weights of codimension $k$ is the set of functions $\Delta:\Sigma(k)\to\QQ$ that, for every $\tau\in\Sigma(k+1)$, fulfill the following \textit{balancing condition}
    \begin{align*}
        \sum_{\sigma\succ\tau}\Delta(\sigma)\mathbf{e}_{\sigma\sm\tau} \in \Span_\ZZ(\tau).
    \end{align*}
    Here, $\mathbf{e}_{\sigma\sm\tau}$ is the primitive generator of the unique ray in $\sigma$ that does not lie in $\tau$. The multiplication on $\MW^\bullet(\Sigma)$ is given by stable intersection of tropical cycles which can be computed using the fan displacement rule from \cite{Fulton_Sturmfels_1997}. Explicitly, the isomorphism sends $\xi\in\CH^k(X_\Sigma)$ to $\xi\cap\Delta_\Sigma$, where
    \begin{align*}
        (\xi\cap\Delta_\Sigma)(\sigma)\coloneqq\int_{X_\Sigma}\xi\cdot[V(\sigma)].
    \end{align*}
    We denote the Chow class corresponding to a Minkowski weight $\Delta$ by $\Delta\cap[X_\Sigma]$.
\end{enumerate}
Consult \cite{Ardila_2024} for an in depth discussion of these three isomorphisms. The vector spaces $\MW^k(\Sigma)$ are defined the same as above for a non-complete smooth fan $\Sigma$.
However, in this case, $\MW^\bullet(\Sigma)$ might not admit a ring structure. Specializing to the permutohedral fan and using Proposition~\ref{prop:geometry_of_perm_fan}, this yields the following explicit descriptions of the Chow ring of the permutohedral variety:
\begin{proposition} \label{prop:Chow_of_perm}
    \begin{enumerate}
        \item There holds $\CH^\bullet(\uX_E)\cong\QQ[x_F]_{\emptyset\subsetneq F\subsetneq E}/(\mathscr I+\mathscr J)$, where
        \begin{align*}
            \mathscr I=\langle x_{F_1}x_{F_2}\mid F_1\not\subset F_2\text{ and }F_2\not\subset F_1\rangle
            \quad\text{and}\quad
            \mathscr J=\left\langle \sum_{F\ni i}x_F-\sum_{F\ni j}x_F \,\middle|\, i,j\in E\right\rangle.
        \end{align*}
        If $\mcF=\bigl(F_1\subsetneq\cdots\subsetneq F_k\bigr)$ is a flag of non-empty proper subsets of $E$, the Chow class of its induced codimension $k$ toric subvariety $V(\sigma_\mcF)\subset\uX_E$ corresponds to
        \[x_\mcF\coloneqq\prod_{i=1}^kx_{F_i}\in\CH^k(\uX_E).\]
        
        \item One can identify the $T$-equivariant Chow ring $\CH_T^\bullet(\uX_E)$ with 
        \begin{align*}
            \left\{f\in\prod_{\sigma\in\mathfrak S_E}\QQ[t_i\mid i\in E]\,\middle|\, \begin{array}{c}f_\sigma-f_{\sigma'}\equiv0\mod t_{\sigma(i)}-t_{\sigma(i+1)}\\ \text{whenever }\sigma'=\sigma\circ(i,i+1)\text{ for some }i\in E\end{array}\right\}
        \end{align*}
        and one obtains $\CH^\bullet(\uX_E)$ by quotienting with the ideal of global polynomials on $\RR^E$ that vanish at the origin.
        
        \item We have $\CH^\bullet(\uX_E)\cong\MW^\bullet(\uSigma_E)$ and, by Proposition~\ref{prop:geometry_of_perm_fan},  one can interpret a Minkowski weight $\Delta$ on $\uSigma_E$ of codimension $k$ as a function on flags of non-empty proper subsets of $E$ of length $n-k$. For convenience, we will therefore sometimes write $\Delta(\mcF)$ for $\Delta(\sigma_\mcF)$.
    \end{enumerate}
\end{proposition}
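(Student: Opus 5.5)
This proposition specializes the three general descriptions of the Chow ring of a smooth complete toric variety to the permutohedral fan, using the combinatorics recorded in Proposition~\ref{prop:geometry_of_perm_fan}. I would treat the three parts separately.

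For part (1), I start from the presentation $\CH^\bullet(\Sigma)=\QQ[x_\rho]/(\mathscr I_\Sigma+\mathscr J_\Sigma)$. The rays of $\uSigma_E$ are indexed by non-empty proper subsets $F$, with generator $\overline{\mathbf e_F}$, so the variables are $x_F$.

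The Stanley--Reisner ideal is generated by the squarefree monomials of non-faces. By Proposition~\ref{prop:geometry_of_perm_fan}, a set of rays spans a cone exactly when the corresponding subsets form a chain. The minimal non-faces are therefore pairs of incomparable sets, which gives $\mathscr I$.

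The linear ideal $\mathscr J_\Sigma$ is generated by $\sum_\rho\langle m,\mathbf e_\rho\rangle x_\rho$ for $m$ in the dual lattice. Here the dual lattice is $\ZZ^E\cap\Span(\mathbf e_E)^\perp$, spanned by $\mathbf e_i-\mathbf e_j$. Since $\langle \mathbf e_i-\mathbf e_j,\mathbf e_F\rangle=[i\in F]-[j\in F]$, this yields exactly $\sum_{F\ni i}x_F-\sum_{F\ni j}x_F$.

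The statement about $V(\sigma_\mcF)$ is the standard fact that for a smooth fan the class of the orbit closure of a cone equals the product of its ray variables.

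For part (2), I use the Payne/Brion description of $\CH_T^\bullet$ as piecewise polynomials on the fan. The maximal cones are indexed by permutations $\sigma$, so a piecewise polynomial is a tuple $(f_\sigma)$ with $f_\sigma\in\QQ[t_i]$. Strictly, the $f_\sigma$ should be polynomials on $\RR^E/\RR\mathbf e_E$; I read the displayed $\QQ[t_i]$ model, which carries an extra equivariant parameter for the full torus, as the paper's convention.

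The compatibility condition says that two adjacent maximal cones must agree on their common facet. The cones for $\sigma$ and $\sigma'=\sigma\circ(i,i+1)$ share the facet in which the chain omits $\sigma([i])$. That facet spans the hyperplane $t_{\sigma(i)}=t_{\sigma(i+1)}$, so agreement on it is equivalent to $f_\sigma-f_{\sigma'}$ being divisible by $t_{\sigma(i)}-t_{\sigma(i+1)}$.

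Every pair of maximal cones meeting in a facet arises from an adjacent transposition in this way. Since the fan is simplicial and complete, it is well known that checking agreement on codimension-one walls suffices. Finally, $\CH^\bullet$ is obtained by quotienting by the global polynomials without constant term, which is the stated description.

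For part (3), I apply the Fulton--Sturmfels isomorphism $\CH^\bullet(X_\Sigma)\cong\MW^\bullet(\Sigma)$ directly. I then reindex cones of codimension $k$ by flags of length $n-k$ via Proposition~\ref{prop:geometry_of_perm_fan}.

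The only step needing real care is the facet-adjacency check in part (2). There I must verify that the chains for $\sigma$ and $\sigma\circ(i,i+1)$ differ in exactly one set, namely $\sigma([i])$. Everything else is translation of standard results.
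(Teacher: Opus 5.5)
Your proposal is correct and is essentially the paper's own argument: the paper gives no separate proof and simply specializes the three standard isomorphisms (the Stanley--Reisner presentation, Payne's piecewise polynomials, and the Fulton--Sturmfels Minkowski weights) to $\uSigma_E$, using the flag description of its cones from Proposition~\ref{prop:geometry_of_perm_fan}. You additionally carry out the routine checks the paper leaves implicit (incomparable pairs as minimal non-faces, pairing with $\mathbf{e}_i-\mathbf{e}_j$, and the walls $t_{\sigma(i)}=t_{\sigma(i+1)}$ between adjacent permutations), and your remark that the $\QQ[t_i\mid i\in E]$ model carries an extra full-torus parameter is apt.
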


Besides the $\mathfrak S_E$-symmetry of $\uSigma_E$ corresponding to relabeling elements of $E$, the permutohedral fan $\uSigma_E$ inherits another symmetry: Recall that the toric variety corresponding to the $n$-simplex is $\PP^n$ which comes with the \textit{Cremona involution}
\begin{align*}
    \crem:\PP^n\dashrightarrow\PP^n,\quad [x_0:\cdots:x_n]\mapsto[x_0^{-1}:\cdots:x_n^{-1}].
\end{align*}
By \eqref{eq:Perm_as_hypersimplices}, the $n$-simplex $\underline\Delta_{E,1}$ is a Minkowski-summand of the permutohedron and hence the fan corresponding to $\PP^n$ coarsens the permutohedral fan. Thus, one has a birational toric morphism $\pi:\uX_E\to\PP^n$. Moreover, the involution $x\mapsto -x$ on the fan $\uSigma_E$ induces a toric morphism $\uX_E\to\uX_E$, which we will again denote by $\crem$. This is justified since the following diagram commutes:
\[\begin{tikzcd}
	{\uX_E} && {\uX_E} \\
	{\PP^n} && {\PP^n}
	\arrow["\crem", from=1-1, to=1-3]
	\arrow["\pi", from=1-1, to=2-1]
	\arrow["\pi", from=1-3, to=2-3]
	\arrow["\crem", dashed, from=2-1, to=2-3]
\end{tikzcd}\]

\noindent Since $\crem$ is an involution on $\uX_E$, the induced pullback $\crem^*$ and pushforward $\crem_*$ maps on Chow classes are equal and we will again just write them as $\crem$. As $-\overline{\mathbf{e}_F}=\overline{\mathbf{e}_{E\sm F}}$, there holds $\crem(x_S)=x_{E\sm S}$ in the notation from Proposition~\ref{prop:Chow_of_perm}.

\subsection*{Nef cones}
Let $X$ be a $n$-dimensional complete smooth variety and let $0\leq k\leq n$. For subvarieties $Y_1,\dots,Y_m$ of $X$ of dimension $k$, the cycle $\sum_{i=1}^mc_i[Y_i]$ is called \textit{effective} if $c_i\geq0$ for all $i=1,\dots,m$. Further, a Chow class is called \textit{effective}, if it is the class of an effective cycle.
A $k$-codimensional Chow class is called \textit{nef} (short for \textit{numerically effective}), if it pairs non-negatively with every effective $k$-dimensional Chow class under the degree map. The set of $k$-codimensional nef Chow classes forms a cone, the \textit{nef cone} $\Nef^k(X)\subset\CH^k(X)$ of $X$.\\
Whenever $\Sigma$ is complete and smooth, one can prove that every effective cycle of $X_\Sigma$ is rationally equivalent to a torus-invariant effective cycle, see \cite{Fulton_MacPherson_Sottile_Sturmfels_1995}. In particular, $\xi\in\CH^k(X_\Sigma)$ is nef if and only if
\begin{align}\label{eq:nef_for_toric}
    (\xi\cap\Delta_\Sigma)(\sigma)=\int_{X_\Sigma}\xi\cdot[V(\sigma)]\geq0\quad\text{for all }\sigma\in\Sigma(k).
\end{align}
Thus, the nef cones of $X_\Sigma$ are polyhedral and under the isomorphism $\CH^k(X_\Sigma)\cong\MW^k(X_E)$, the nef cone $\Nef^k(X_\Sigma)$ corresponds to the cone of non-negative $k$-codimensional Minkowski weights on $\Sigma$. We will denote this cone (also for possibly non-complete $\Sigma$) by $\MW^k_{\geq0}(\Sigma)$.

Having this, we can characterize the extremal rays of $\Nef^k(X_\Sigma)$ via Minkowski weights: To this end, we define $\Sigma_\Delta$ for $\Delta\in\MW^k(\Sigma)$ to be the subfan of $\Sigma$ whose maximal cones are precisely those from $\supp(\Delta) = \{\sigma\in\Sigma(k)\mid\Delta(\sigma)\neq0\}$.

\begin{proposition} \label{prop:char_nef_rays}
    For a non-zero Chow class $\xi\in\CH^k(X_\Sigma)$ with induced Minkowski weight $\Delta=\xi\cap\Delta_{\Sigma}\in\MW^k(\Sigma)$, the following are equivalent:
    \begin{enumerate}
        \item $\xi$ generates an extremal ray of $\Nef^k(X_\Sigma)$,\label{eq:char_nef_rays_1}
        \item $\Delta$ generates an extremal ray of $\MW^k_{\geq0}(\Sigma)$,\label{eq:char_nef_rays_2}
        \item $\Sigma_\Delta$ is inclusion-minimal in $\{\Sigma_{\Delta'}\mid \Delta'\in\MW^k_{\geq0}(\Sigma)\text{ non-zero}\}$, \label{eq:char_nef_rays_3}
        \item $\MW^k_{\geq0}(\Sigma_\Delta)=\QQ_{\geq0}\Delta$. \label{eq:char_nef_rays_4}
    \end{enumerate}
\begin{proof}
    The equivalence of \eqref{eq:char_nef_rays_1} and \eqref{eq:char_nef_rays_2} follows from the above discussion.\\
    '\eqref{eq:char_nef_rays_2}$\Rightarrow$\eqref{eq:char_nef_rays_3}': If there is a non-zero $\Delta'\in\MW^k_{\geq0}(\Sigma_\Delta)$ with $\Sigma_{\Delta'}\subsetneq\Sigma_\Delta$, then we can choose $\eps\in\QQ_{>0}$ small such that $\Delta-\eps\Delta'\in\MW^k_{\geq0}(\Sigma_\Delta)$. The decomposition $\Delta=(\Delta-\eps\Delta')+\eps\Delta'$ shows that $\Delta$ does not generate an extremal ray in $\MW^k_{\geq0}(\Sigma)$.\\
    '\eqref{eq:char_nef_rays_3}$\Rightarrow$\eqref{eq:char_nef_rays_4}': Let $\Delta'\in\MW^k_{\geq0}(\Sigma_\Delta)$ be nonzero. By condition \eqref{eq:char_nef_rays_3}, we then have $\Sigma_{\Delta'}=\Sigma_\Delta$. Let $\sigma\in\Sigma_\Delta(0)$ such that $\frac{\Delta'(\sigma)}{\Delta(\sigma)}$ is minimal and note that $\Delta'-\frac{\Delta'(\sigma)}{\Delta(\sigma)}\Delta$ is non-negative with support strictly contained in the support of $\Delta$. Therefore, we have $\Delta'=\frac{\Delta(\sigma)}{\Delta'(\sigma)}\Delta$ and hence $\Delta'\in\QQ_{\geq0}\Delta$.\\
    '\eqref{eq:char_nef_rays_4}$\Rightarrow$\eqref{eq:char_nef_rays_2}': Let $\Delta'$ be a summand of $\Delta$ in $\MW^k_{\geq0}(\Sigma)$. Since the difference $\Delta-\Delta'$ is non-negative, we find $\Sigma_{\Delta'}\subset\Sigma_\Delta$ and thus $\Delta'\in\MW^k_{\geq0}(\Sigma_\Delta)$. By condition \eqref{eq:char_nef_rays_4}, the weight $\Delta'$ is a non-negative multiple of $\Delta$.
\end{proof}
\end{proposition}

\begin{example} \label{exp:nef_1_and_n-1}
We can use this characterization to describe the nef cone of curves $\Nef^{n-1}(\uX_E)$: Recall that, by Propositions~\ref{prop:geometry_of_perm_fan} and~\ref{prop:Chow_of_perm}, a Minkowski weight $\Delta$ from $\MW^{n-1}(\uSigma_E)$ can be regarded as a function from $2^E\sm\{\emptyset,E\}$ to $\QQ$ that additionally fulfills
\begin{align*}
    \Bigg(\sum_{\substack{\emptyset\subsetneq S\subsetneq E\\i\in S}}\Delta(S)\Bigg)_{i\in E}\in\Span(\mathbf{e}_E).
\end{align*}
Let $L_E$ denote the affine subspace of the space of $\MW^{n-1}(\uSigma_E)$ consisting of those $\Delta$ for which the above vector is $\mathbf{e}_E$, i.e., for which all entries are one. Supports of elements from the polytope $W_E\coloneqq\MW^{n-1}_{\geq0}(\uSigma_E)\cap L_E$ are also regarded as \textit{balanced collections} on $E$, which generalize partitions of sets: Any partition $\mcP$ of $E$ consisting of non-empty proper subsets can be realized as a balanced collection, since it is the support of
\begin{align*}
    \Delta(S)=\begin{cases}
        1 & \text{if }S\in\mcP\\
        0 & \text{otherwise}
    \end{cases}.
\end{align*}
The extremal rays of $\MW^{n-1}_{\geq0}(\uSigma_E)$ are generated by the vertices of $W_E$, which, by Proposition~\ref{prop:char_nef_rays}, are in bijection with balanced collections on $E$ that are minimal with respect to inclusion. As mentioned in the introduction, these so called \textit{minimal balanced collections} have been studied before from a purely combinatorial point of view and independent from this geometric perspective, see \cite{MGS_2023}.
\end{example}

\section{Statement and proof of the main result} \label{sec:main_result}

\subsection*{Bergman classes and tautological classes of matroids}
For a thorough introduction to notations and results related to the theory of matroids, we refer the reader to \cite{Oxley_2006} and \cite{Katz_2016}. We will briefly recall the construction of Bergman classes and tautological Chern classes of matroids made in \cite{Ardila_Klivans_2005} and \cite{Berget_Eur_Spink_Tseng_2023} respectively:\\
For any matroid $M$ on $E$, one can prove that all its maximal flags of flats have the same length, namely $\rk(M)+1$. Therefore, if $M$ is loopless, one can define the \textit{Bergman fan} $\uSigma_M$ of $M$ as the $\rk(M)-1$-dimensional subfan of $\uSigma_E$ whose maximal cones are given by those $\sigma_\mcF$, where $\mcF$ consists only of flats of $M$ that are non-empty and proper subsets of $E$. For any, not necessarily loopless, matroid $M$ on $E$, this construction gives rise to the weight function
\begin{align*}
    \uDelta_M:\uSigma_E(\crk(M))\to\QQ, \quad
    \sigma_\mcF\mapsto\begin{cases}
        1 &\text{if $M$ is loopless and }\sigma_\mcF\in\uSigma_M\\
        0 &\text{otherwise}
    \end{cases}.
\end{align*}
By the flat partition property of matroids, this defines a balanced Minkowski weight and hence it induces the Chow class $\uDelta_M\cap[\uX_E]$, called the \textit{Bergman class} of $M$.

Let $M$ be a matroid on $E$ and fix a linear order $\preceq$ on $E$. Any $\sigma\in\mathfrak S_E$ then induces a linear order $\preceq_\sigma$ on $E$ via $\sigma(e)\preceq_\sigma\sigma(f):\Leftrightarrow e\preceq f$, which defines a lexicographic order on cardinality $\rk(M)$ subsets of $E$ by $A\leq_\sigma B:\Leftrightarrow \min_{\preceq_\sigma}(A\sm B)\preceq_\sigma\min_{\preceq_\sigma}(B\sm A)$. Having this, the \textit{lex-first basis of $M$ with respect to $\sigma$} is given by $B_\sigma(M)\coloneqq\min_{\leq_\sigma}\mcB(M)$, where $\mcB(M)$ is the set of bases of $M$. The \textit{tautological sub (resp. quotient) equivariant Chern classes} $c_k^T(\umcS_M^\vee)$ (resp. $c_k^T(\umcQ_M)$) of $M$ is the equivariant Chow class associated (in the sense of Proposition~\ref{prop:Chow_of_perm}) to the piecewise polynomial function
\begin{align*} 
    c_k^T(\umcS_M^\vee)_\sigma
    \coloneqq\Elem_k(B_\sigma(M)) \quad\text{or resp.}\quad c_k^T(\umcQ_M)_\sigma
    \coloneqq(-1)^k\Elem_k(E\sm B_\sigma(M)).
\end{align*}
Here, $\Elem_k(S)$ denotes the $k$-th elementary symmetric polynomial in those variables that are indexed by $S$:
\begin{align*}
    \Elem_k(S)\coloneqq\sum_{T\subset S,|T|=k}\prod_{i\in T}t_i\in\QQ[t_i\mid i\in E].
\end{align*}
Note that $c_k^T(\umcS_M^\vee)$ and $c_k^T(\umcQ_M)$ are zero for $k>\rk(M)$ and $k>\crk(M)$ respectively.
The image of $c_k^T(\umcS_M^\vee)$ (resp. $c_k^T(\umcQ_M)$) under the surjection $\CH_T^\bullet(\uX_E)\to\CH^\bullet(\uX_E)$ is denoted by $c_k(\umcS_M^\vee)$ (resp. $c_k(\umcQ_M)$) and is called the \textit{tautological sub (resp. quotient) Chern class of $M$}. By \cite[Prop.~5.11]{Berget_Eur_Spink_Tseng_2023}, one has the following duality property:
\begin{align} \label{eq:duality_property}
    c_k(\umcS_M^\vee)=\crem c_k(\umcQ_{M^*}).
\end{align}
The proof of our main theorem will heavily rely on the explicit description of the Minkowski weights associated to tautological classes of matroids given in \cite[Prop.~7.4]{Berget_Eur_Spink_Tseng_2023}:

\begin{proposition} \label{prop:Comb_Descr_MW_of_taut_classes}
    Let $M$ be a matroid on $E$ and $k\in[n]$. Let further $\mcF=\bigl(F_1\subsetneq\cdots\subsetneq F_{n-k}\bigr)$ be a flag consisting of non-empty proper subsets of $E$ and put $F_0=\emptyset$ and $F_{n-k+1}=E$. There holds
    \begin{align*}
        \bigl(c_k(\umcS_M^\vee)\cap\Delta_{\uSigma_E}\bigr)(\sigma_\mcF)
        &=\begin{cases}
            1 & \begin{aligned}
            &\text{if, for $i\in[n-k]$, exactly $\rk(M)-k$ minors $M|F_{i+1}/F_i$}\\ &\text{are coloops and the rest are uniform of corank one}
            \end{aligned}\\
            0 & \text{otherwise}
        \end{cases}
    \end{align*}
    and
    \begin{align*}
        \bigl(c_k(\umcQ_M)\cap\Delta_{\uSigma_E}\bigr)(\sigma_\mcF)
        &=\begin{cases}
            1 & \begin{aligned}
            &\text{if, for $i\in[n-k]$, exactly $\crk(M)-k$ minors $M|F_{i+1}/F_i$}\\ &\text{are loops and the rest are uniform of rank one}
            \end{aligned}\\
            0 & \text{otherwise}
        \end{cases}.
    \end{align*}
    In particular, $c_k(\umcS_M^\vee)$ and $c_k(\umcQ_M)$ are nef.
\end{proposition}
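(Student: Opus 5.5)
The plan is to compute each value $\int_{\uX_E} c_k(\umcQ_M)\cdot x_\mcF$ by restricting the class to the torus-invariant subvariety $V(\sigma_\mcF)$, and to evaluate the restriction factor by factor. The sub case then follows from the quotient case by the duality \eqref{eq:duality_property}.

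\textbf{Step 1: structure of $V(\sigma_\mcF)$.} Set $G_i\coloneqq F_{i+1}\sm F_i$ for $i\in[n-k]$. The star of $\sigma_\mcF$ in $\uSigma_E$ consists of the flags refining $\mcF$. Its quotient fan is the product of the permutohedral fans $\uSigma_{G_i}$, so
\[V(\sigma_\mcF)\cong\prod_{i\in[n-k]}\uX_{G_i}, \qquad \sum_{i\in[n-k]}(|G_i|-1)=(n+1)-(n-k+1)=k .\]

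\textbf{Step 2: restriction of the tautological class.} A maximal cone containing $\sigma_\mcF$ corresponds to a permutation $\sigma$ whose order lists $G_0$ first, then $G_1$, and so on. For such $\sigma$, a greedy argument shows that the lex-first basis decomposes as
\[B_\sigma(M)=\bigsqcup_{i} B_{\sigma|_{G_i}}\bigl(M|F_{i+1}/F_i\bigr).\]
Consequently the restriction of the piecewise polynomial $\sum_k c_k^T(\umcQ_M)$ to the star of $\sigma_\mcF$ is the product of the total Chern polynomials of the minors $N_i\coloneqq M|F_{i+1}/F_i$, each pulled back from its factor $\uX_{G_i}$. In $K$-theoretic language, $[\umcQ_M]$ restricts to the external sum of the $[\umcQ_{N_i}]$. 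The only subtlety is that the equivariant normal-direction variables must drop out after passing to the non-equivariant ring; I expect this bookkeeping with piecewise polynomials on the star to be the main technical obstacle.

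\textbf{Step 3: evaluate by Whitney and degree counting.} By the Whitney formula, $c_k$ of the restriction is a sum over decompositions $k=\sum_i j_i$ of products $\prod_i c_{j_i}(\umcQ_{N_i})$. Since the total dimension is $k=\sum_i (|G_i|-1)$, a term survives only if $j_i=|G_i|-1$ for every $i$. Such a term is nonzero only when $|G_i|-1\le\crk(N_i)=|G_i|-\rk(N_i)$, that is, when $\rk(N_i)\le 1$ for all $i$. We then split into cases:
\begin{itemize}
\item If $\rk(N_i)=0$, then $\umcQ_{N_i}$ is trivial, so the factor is nonzero only when $|G_i|=1$, i.e.\ $N_i$ is a single loop; the factor is then $1$.
\item If $\rk(N_i)=1$, then $c_{\crk(N_i)}(\umcQ_{N_i})$ is the Bergman class by \cite[Thm.~7.6]{Berget_Eur_Spink_Tseng_2023}. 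Its degree is $1$ if $N_i$ is loopless, i.e.\ uniform of rank one, and $0$ otherwise.
\end{itemize}
Since $\sum_i\rk(N_i)=\rk(M)$, exactly $(n-k+1)-\rk(M)=\crk(M)-k$ of the minors are loops. This gives the stated formula for $c_k(\umcQ_M)$.

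\textbf{Step 4: the sub class and nefness.} For $c_k(\umcS_M^\vee)$, apply \eqref{eq:duality_property}. The map $\crem$ sends $\mcF$ to the complementary flag $E\sm F_{n-k}\subsetneq\cdots\subsetneq E\sm F_1$, and the minors of $M^*$ along this flag are the duals of the $N_i$. Under duality, loops become coloops and uniform rank-one matroids become uniform corank-one matroids, which gives the first formula. Nefness is then immediate from \eqref{eq:nef_for_toric}, since all values lie in $\{0,1\}$.
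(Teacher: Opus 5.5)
The paper gives no proof of this proposition; it imports it from \cite[Prop.~7.4]{Berget_Eur_Spink_Tseng_2023}. Your argument is a correct, self-contained derivation. Its main gain over the citation is that it reduces the whole statement to matroids of rank at most one:
\begin{itemize}
\item the greedy description of $B_\sigma(M)$ for orderings refining $\mcF$ makes the total Chern polynomial multiplicative over the minors $N_i$;
\item the dimension count on $V(\sigma_\mcF)\cong\prod_i\uX_{G_i}$ forces $j_i=|G_i|-1$;
\item the vanishing of $c_j(\umcQ_N)$ for $j>\crk(N)$ forces $\rk(N_i)\le 1$;
\item the sub case follows from \eqref{eq:duality_property}, because $(M|F_{i+1}/F_i)^*=M^*|(E\sm F_i)/(E\sm F_{i+1})$ is exactly the minor of $M^*$ along the complementary flag.
\end{itemize}

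Two points should be tightened.

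\textbf{The Step~2 obstacle does not arise.} With the paper's conventions, $\CH_T^\bullet(\uX_E)$ is built from polynomials in all $t_j$, $j\in E$. So the torus is $(\CC^*)^E=\prod_i(\CC^*)^{G_i}$, and it acts factorwise on $V(\sigma_\mcF)\cong\prod_i\uX_{G_i}$. The factor belonging to $G_i$ involves only the variables $t_j$ with $j\in G_i$. Hence the restricted piecewise polynomial is literally $\prod_i p_i^*c^T(\umcQ_{N_i})$ for the projections $p_i$. Restriction is a map of $\QQ[t_j\mid j\in E]$-algebras, so reducing modulo $(t_j)_{j\in E}$ commutes with it, and no normal variables are left over.

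\textbf{Avoid citing \cite[Thm.~7.6]{Berget_Eur_Spink_Tseng_2023}.} That theorem is precisely the case $k=\crk(M)$ of the statement you are proving, so it is cleaner to check the rank-one factor directly. Let $\rk(N)=1$ on a ground set $G$ with $|G|=m$, and let $P$ be its set of non-loops. Then $B_\sigma(N)=\{f_\sigma\}$, where $f_\sigma$ is the first element of $P$ in $\sigma$. This gives $c^T(\umcQ_N)\cdot(1-t_{f_\sigma})=\prod_{j\in G}(1-t_j)$, which is $1$ modulo global polynomials vanishing at the origin. Hence $c(\umcQ_N)=(1-D)^{-1}$ and $c_{m-1}(\umcQ_N)=D^{m-1}$, where $D$ is the divisor of the piecewise linear function $v\mapsto\max_{j\in P}v_j$.
\begin{itemize}
\item If $P=G$, then $D=\sum_{S\not\ni p}x_S$ is the pullback of a hyperplane class along a birational toric morphism $\uX_G\to\PP^{m-1}$, so $\int D^{m-1}=1$.
\item If $P\neq G$, then $D$ is pulled back from $\uX_P$, which has dimension $|P|-1<m-1$, so $D^{m-1}=0$.
\end{itemize}
This recovers exactly your two rank-one cases without the circular-looking citation.
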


We will now discuss the two extreme cases of tautological Chern classes of matroids and relate them to the two classes of extremal nef rays from the introduction:

\begin{example} \label{exp:c_1}
    Note that 
    \begin{align*}
        c_1^T(\umcS_M^\vee)_\sigma(\mathbf{e}_S) = \Elem_1(B_\sigma(M))(\mathbf{e}_S)=|B_\sigma(M)\cap S|,
    \end{align*}
    where $\sigma$ is any permutation that sends the $|S|$ smallest (with respect to $\preceq$) elements of $E$ to $S$. By the construction of lex-first bases, it follows that for such $\sigma$, the basis $B_\sigma(M)$ intersects $S$ maximally among all bases of $M$ and therefore $|B_\sigma(M)\cap S|=\rk_M(S)$. In order to compute $c_1(\umcS_M^\vee)$ in the Chow ring of the fan $\uSigma_E$, we have to subtract the global polynomial $\frac{\rk(M)}{n+1}\sum_{i\in E}t_i$ from $c_1^T(\umcS_M^\vee)$ to obtain a piecewise linear function on $\uSigma_E$ which shows that
    \begin{align*}
        c_1(\umcS_M^\vee)= \sum_{\emptyset\subsetneq S\subsetneq E}\left(\rk_M(S)-\frac{\rk(M)}{n+1}|S|\right)x_S.
    \end{align*}
    As the piecewise linear function $c_1^T(\umcQ_M)$ differs from $c_1^T(\umcS_M^\vee)$ by the global linear function $\sum_{i\in E}t_i$, the same description holds true for the first tautological quotient Chern class, i.e., we have $c_1(\umcQ_M)=c_1(\umcS_M^\vee)$.
    In \cite[Thm.~2.1.5]{Nguyen_1978}, it was shown that a matroid $M$ on $E$ is connected up to loops and coloops if and only if its rank function $\rk_M$ generates an extremal ray of the cone of submodular functions modulo modular functions. Using the identification from the introduction and the fact that $2^E\to\QQ,S\mapsto\frac{\rk(M)}{n+1}|S|$ is modular, this precisely means that $c_1(\umcQ_M)$ generates an extremal ray of $\Nef^1(\uX_E)$.
\end{example}

\begin{example}\label{exp:c_top}
    Tautological Chern classes of matroids also generalize Bergman classes: According to \cite[Thm.~7.6]{Berget_Eur_Spink_Tseng_2023}, there holds
    \begin{align*}
        c_{\crk(M)}(\umcQ_M)\cap\Delta_{\uSigma_E} = \uDelta_M.
    \end{align*}
    Therefore, the result from \cite[Thm.~5.7]{Huh_2016} shows that, if $M$ is loopless, then $c_{\crk(M)}(\umcQ_M)$ generates an extremal ray of $\Nef^{\crk(M)}(\uX_E)$.
    Since the structure of our proof for Theorem~\ref{thm:main_result} will be similar to the one of this result, let us summarize the steps made in \cite{Huh_2016}: By Proposition~\ref{prop:char_nef_rays}, it is sufficient to show that every weight from $\MW^{\crk(M)}(\uSigma_M)$ is constant. This was done as follows:
    \begin{enumerate}[label=(\Roman*)]
        \item For any $\Delta\in\MW^{\crk(M)}(\uSigma_M)$, consider the balancing condition at a flag $\mcF$ of length $\rk(M)-2$ consisting only of flats of $E$ and deduce that $\Delta$ is constant on all maximal flags of flats that contain $\mcF$ as a subflag.
        \item Conclude by the fact that Bergman fan $\uSigma_M$ is connected in codimension one, which is a consequence of the shellability of the order complex of the lattice of flats of $M$, proved in \cite{Bjoerner_1992}.
    \end{enumerate}
\end{example}

\subsection*{Proof of the main result} 
We are now ready to state and prove our main result:
\begin{mainthmm} \label{thm:main_result}
Let $M$ be a matroid on $E$, let $l$ be the number of loops and $l'$ be the number of coloops of $M$. Then, for any integers $k$ and $k'$ with $1\leq k<\crk(M)-l$ and $1\leq k'<\rk(M)-l'$, the following are equivalent:
\begin{enumerate}
    \item $M$ is connected up to loops and coloops, \label{eq:main_result_1}
    \item $c_k(\umcQ_M)$ generates an extremal ray of $\Nef^k(\uX_E)$, \label{eq:main_result_2}
    \item $c_{k'}(\umcS_M^\vee)$ generates an extremal ray of $\Nef^{k'}(\uX_E)$. \label{eq:main_result_3}
\end{enumerate}
\end{mainthmm}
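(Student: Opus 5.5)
We prove the equivalence of (1) and (2) directly and obtain (3) from it by duality. By \eqref{eq:duality_property} we have $c_{k'}(\umcS_M^\vee)=\crem c_{k'}(\umcQ_{M^*})$. Since $\crem$ is induced by the fan automorphism $x\mapsto -x$ of $\uSigma_E$, it maps $\Nef^{k'}(\uX_E)$ isomorphically onto itself and so preserves extremal rays. Dualizing swaps loops and coloops, swaps $\rk$ and $\crk$, and preserves connectedness up to loops and coloops. Hence (3) for $M$ is the same statement as (2) for $M^*$ with $k=k'$, and it suffices to prove (1)$\Leftrightarrow$(2).

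For (2)$\Rightarrow$(1) we argue by contraposition. Suppose that after deleting loops and coloops, $M$ splits as $M_1\oplus M_2$ with both summands nonempty. The tautological $K$-classes are multiplicative under direct sums, so $c^T(\umcQ_M)$ should factor as a product of pullbacks of the total Chern classes of the summands along the appropriate maps. Then $c_k(\umcQ_M)=\sum_{a+b=k}c_a(\cdot)c_b(\cdot)$ is a sum of nef classes. Rather than use a product formula, it is cleaner to work with the weights of Proposition~\ref{prop:Comb_Descr_MW_of_taut_classes}. Let $\Delta=c_k(\umcQ_M)\cap\Delta_{\uSigma_E}$. On a flag $\mcF$ in its support, each minor $M|F_{i+1}/F_i$ is a loop or is uniform of rank one, and we can record how many of the loop steps occur in $E_1$ versus $E_2$. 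Choose a split $a+b=k$ permitted by the hypothesis $k<\crk(M)-l$, and let $\Delta_{a,b}$ be $\Delta$ restricted to flags of type $(a,b)$. One checks from the balancing condition that each $\Delta_{a,b}$ is again balanced, so $\Delta=\sum\Delta_{a,b}$ is a nontrivial decomposition. The bound $k<\crk(M)-l$ is exactly what guarantees that at least two types occur. Alternatively, it is enough to exhibit one nonzero nonnegative weight supported strictly inside $\uSigma_\Delta$, and then apply Proposition~\ref{prop:char_nef_rays}.

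For (1)$\Rightarrow$(2) we may assume $M$ has no loops or coloops, since each loop or coloop only adds a forced rank-zero or rank-one step. We follow the Huh strategy from Example~\ref{exp:c_top}, using criterion (4) of Proposition~\ref{prop:char_nef_rays}: we show that every weight $\Delta'\in\MW^k(\uSigma_\Delta)$ is constant on the support $\mcS$ of $\Delta$. The support $\mcS$ consists of flags in which $\crk(M)-k$ steps are loops and the remaining steps are rank-one uniform.
\begin{enumerate}[label=(\Roman*)]
\item \emph{Local step.} Take $\tau\in\mcS$ and delete one set $F_i$, so that two consecutive steps merge. The balancing condition at the resulting codimension-$(k+1)$ flag then relates all ways to refine the merged step. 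The merged minor is either rank two, or rank one with a single loop step. A rank-two minor is handled as in Huh's argument. In the rank-one-with-a-loop case, the refinements are indexed by singleton loops of the minor together with the rank-one parallel classes, and the vectors $\mathbf{e}_{S}$ should admit a unique linear relation. That relation forces $\Delta'$ to be equal on all admissible refinements.
\item \emph{Global step.} We show that the graph on $\mcS$, with flags adjacent when they differ by a single such exchange, is connected whenever $M$ is connected and $1\le k<\crk(M)$. Connectedness of $M$ enters here. Via circuit exchange, it allows a loop step to be moved past a rank-one step, and it allows one to pass between different choices of which elements become loops in the contractions.
\end{enumerate}

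I expect the global connectivity argument in (II) to be the main obstacle. The flags in $\mcS$ interleave a flag of flats of truncations or contractions with extra "loop" steps, so shellability of the lattice of flats does not apply directly. I would first try to reduce to the case where the loop steps sit at the end, that is, flags of flats of $M$ truncated to rank $\rk(M)$ followed by a chain in the final parallel class. For that special shape I would use connectivity in codimension one of the Bergman fan of a suitable truncation or elongation of $M$. I would then use (I) to move loop steps earlier, using the connectedness of $M$ so that every element can eventually serve as a loop step.
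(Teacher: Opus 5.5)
\textbf{The decisive step of (1)$\Rightarrow$(2) is missing.} Your Part~II says that all flags of $\uSigma_M^k(0)$ are linked by the exchanges of Part~I whenever $M$ is connected and $1\leq k<\crk(M)$. That is exactly Proposition~\ref{prop:connectedness}, which carries essentially all of the paper's work, and you only announce it.

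The paper proves it by induction on $|E|$:
\begin{itemize}
\item It normalizes a flag with $\da_e$ so that its first set lies in the parallel class of $e$.
\item Flags beginning with an entire contractible parallel class $P$ are handled by induction on $M/P$ (Lemma~\ref{lem:SuppOfContr}).
\item Flags beginning with a proper subset of $P$ are handled by pushing a parallel element to the last step and deleting it (Lemmas~\ref{lem:move_parallel_elements} and~\ref{lem:SuppOfDel}), with a separate argument when $\crk(M)=k+1$.
\item The two kinds are bridged via Lemma~\ref{lem:existence_breaking_flag} and the existence of three contractible parallel classes. This rests on Wu's result that the contractible elements of a connected simple matroid span it.
\end{itemize}

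Your sketch has no substitute for this, and its diagnosis of the difficulty is off. Pushing a loop step \emph{later} is always possible: a loop step $\{x\}$ followed by a step $S$ merges into a minor of rank at most one in which $x$ is still a loop. So every flag is equivalent to a normal form: a maximal chain of flats of the deletion $M\sm X$, followed by the elements of $X$ as singleton steps, where $|X|=\crk(M)-k$ and $E\sm X$ is spanning. No truncation or elongation is involved.

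Codimension-one connectivity of the Bergman fan of $M\sm X$ handles the chain. The real content is changing $X$. Within this normal form, the only move that does so swaps some $x\in X$ for some $y\notin X$. It is available only when $x$ and $y$ lie in a common cocircuit of $M\sm(X\sm\{x\})$. That matroid can be disconnected even though $M$ is connected: take $M$ of rank two with three parallel classes of size two, and $X$ a whole class together with one further element $x$. The missing lemma is that connectedness of $M$ nevertheless links all admissible $X$, possibly through flags outside the normal form. That is precisely where hypothesis (1) must enter.

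\textbf{The rest is sound in outline.}
\begin{itemize}
\item The Cremona duality reduction of (3) to (2) is the paper's.
\item Your proof of (2)$\Rightarrow$(1), splitting the weight by how many loop steps lie in each summand, is a valid weight-level replacement for the paper's product formula $c_k(\umcQ_M)=\sum_i f^*c_i(\umcQ_{M'})\,g^*c_{k-i}(\umcQ_{M''})$. Every balancing relation compares refinements of a single merged gap, and these preserve the two counts, so each piece is balanced. If $a,b$ count loop steps, they sum to $\crk(M)-l-k$, not $k$.
\item Your Part~I is the paper's relation $\sim$ and Proposition~\ref{prop:bal_cond_implies_loc_const}. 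However, your case list omits two cases: two consecutive loop steps, and a loop step parallel to the preceding rank-one step. In the latter, the merged minor is uniform of rank one and balancing only identifies refinements within each gap separately. This is the phenomenon of Example~\ref{exp:bal_cond_counterex}.
\item The reduction to the loopless and coloopless case needs the pullback-face argument of Lemma~\ref{lem:pullback_subspace} and Corollary~\ref{cor:lifting_property_of_nef}. The remark that loops and coloops are forced singleton steps is not enough on its own.
\end{itemize}
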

Recall that the tautological sub and quotient Chern classes are related by the duality property from \eqref{eq:duality_property}. If we want to make use of this, we should understand how the nef cones of $\uX_E$ behave under the Cremona involution:
\begin{lemma} \label{lem:nef_crem_invariant}
    The nef cones of $\uX_E$ are Cremona-invariant and the Cremona involution preserves extremality of rays.
\begin{proof}
    By the projection formula, a pullback of a nef Chow class is again nef, which implies $\crem\bigl(\Nef^k(\uX_E)\bigr)\subset\Nef^k(\uX_E)$. Applying $\crem$ again proves equality. The second claim follows follows from the first one as $\crem$ is linear.
\end{proof}
\end{lemma}

\begin{corollary}
    The equivalence of \eqref{eq:main_result_1} and \eqref{eq:main_result_3} from Theorem~\ref{thm:main_result} is implied by the equivalence of \eqref{eq:main_result_1} and \eqref{eq:main_result_2}.
\begin{proof}
    By the duality property from \eqref{eq:duality_property}, we have $c_{k'}(\umcS_M^\vee)=\crem c_{k'}(\umcQ_{M^*})$. By Lemma~\ref{lem:nef_crem_invariant}, this generates an extremal ray if and only if $c_{k'}(\umcQ_{M^*})$ does. Assuming the equivalence of conditions \eqref{eq:main_result_1} and \eqref{eq:main_result_2}, this is the case if and only if $M^*$ is connected up to loops and coloops, which is equivalent to $M$ being connected up to loops and coloops.
\end{proof}
\end{corollary}

Next, we will show that it is sufficient to prove the equivalence of \eqref{eq:main_result_1} and \eqref{eq:main_result_2} for the case where $M$ is loopless and coloopless.
Whenever $\mcF=\bigl(F_1\subsetneq\cdots\subsetneq F_d\bigr)$ is a flag on $E$, we will for the rest of this section assume that every $F_1,\dots,F_d$ are non-empty and proper subsets of $E$ and we use the convention $F_0\coloneqq\emptyset$ and $F_{d+1}\coloneqq E$. If $F_{i+1}\sm F_i$ is of cardinality at least two for some $i\in[d]$, we say that this set is a $\textit{gap}$ of $\mcF$. We put
\begin{align*}
\gaps(\mcF)\coloneqq\bigl\{i\in[d] \,\big|\, |F_{i+1}\sm F_i|\geq 2\bigr\}.
\end{align*}
For a flag $\mcF$ on $E\cup\{e\}$, let $\mcF\sm e$ be the flag on $E$ that arises from $\mcF$ by removing $e$ from every set of $\mcF$, contracting duplicate sets and deleting the empty set and the set $E$ in case they arise in the process. Geometrically, one has $\pi(\sigma_{\mcF})=\sigma_{\mcF\sm e}$ where $\pi:\RR^{E\cup\{e\}}/\Span(\mathbf{e}_{E\cup\{e\}})\to\RR^E/\Span(\mathbf{e}_E)$ is the projection. Further, $e$ is contained in a gap of $\mcF$ if and only if $\dim(\sigma_\mcF)=\dim(\sigma_{\mcF\sm e})$.

\begin{lemma} \label{lem:pullback_subspace}
     Let $f:\uX_{E\cup\{e\}}\to\uX_{E}$ be the toric morphism induced from the projection $\pi$. Then, the subspace $f^*\CH^k(\uX_E)$ of $\CH^k(\uX_{E\cup\{e\}})$ is the orthogonal complement with respect to the degree pairing of the space spanned by $x_\mcF$, where $\mcF$ ranges over those flags of length $n+1-k$ such that $e$ is contained in a gap of $\mcF$.
\begin{proof}
    Let $x\in\CH^k(\uX_E)$ and $\mcF$ be a flag as in the claim. By the projection formula, we then have
    \begin{align*}
        \int_{\uX_{E\cup\{e\}}}x_\mcF\cdot f^*x=\int_{\uX_E}f_*x_\mcF\cdot x = 0,
    \end{align*}
    as $f_*x_\mcF=0$ since the dimension of $V(\sigma_\mcF)$ drops after taking the image under $f$.\\
    To see the other direction, suppose that $y\in\CH^k(\uX_{E\cup\{e\}})$ is orthogonal to all $x_\mcF$ as in the claim. In terms of Minkowski weights, this precisely means that no flag having a gap containing $e$ is in the support of $\Delta=y\cap\Delta_{\uSigma_E}$. Suppose $\mcG=\bigl(G_1\subsetneq\cdots\subsetneq G_{n-k}\bigr)$ is a flag such that it induces a cone from $\Sigma_{\Delta}(1)$ and $l$ is such that $e$ is contained in the gap $G_{l+1}\sm G_l$. Then, the balancing condition for $\Delta$ at $\mcG$ states that
    \begin{align*}
        &\Delta\bigl(G_1\subsetneq\cdots\subsetneq G_l\subsetneq G_l\cup\{e\}\subsetneq G_{l+1}\subsetneq\cdots\subsetneq G_{n-k}\bigr)\overline{\mathbf{e}_{G_l\cup\{e\}}}\\
        +\,&\Delta\bigl(G_1\subsetneq\cdots\subsetneq G_l\subsetneq G_{l+1}\sm\{e\}\subsetneq G_{l+1}\subsetneq\cdots\subsetneq G_{n-k}\bigr)\overline{\mathbf{e}_{G_{l+1}\sm\{e\}}}
        \in\Span(\sigma_\mcG).
    \end{align*}
    Clearly, this forces the two appearing values of $\Delta$ to be equal. This shows that, whenever we have $\mcF,\mcF'\in\Sigma_\Delta(0)$ with $\mcF\sm e=\mcF'\sm e$, then $\Delta(\mcF)=\Delta(\mcF')$ and hence $\Delta$ is the pullback of the Minkowski weight on $\uSigma_E$ that assigns to a flag $\mcH$ the value of $\Delta$ at any flag $\mcF$ with $\mcF\sm e=\mcH$.
\end{proof}
\end{lemma}

\begin{corollary} \label{cor:lifting_property_of_nef}
    Let $E'\subsetneq E$ and $f:\uX_E\to\uX_{E'}$ be the toric morphism induced from the projection $\RR^E/\Span(\mathbf{e}_E)\to\RR^{E'}/\Span(\mathbf{e}_{E'})$. For $x\in\CH^k(\uX_{E'})$, we have that $x$ is nef if and only if the pullback $f^*x$ is nef. The cone $f^*\Nef^k(\uX_{E'})$ is a face of $\Nef^k(\uX_E)$ and $x$ generates an extremal ray of $\Nef^k(\uX_{E'})$ if and only if $f^*x$ generates an extremal ray of $\Nef^k(\uX_E)$.
\begin{proof}
    If $x$ is nef, then $f^*x$ is nef as a pullback of a nef Chow class. Any effective Chow class on $\uX_{E'}$ can be written as a pushforward under $f$ of an effective Chow class on $\uX_E$. Using this and the projection formula, one sees that, if $f^*x$ is nef, then $x$ is nef. Hence,
    \begin{align*}
        f^*\Nef^k(\uX_{E'})
        = f^*\CH^k(\uX_{E'})\cap\Nef^k(\uX_E).
    \end{align*}
    By the previous lemma and the observation from~\eqref{eq:nef_for_toric}, it is clear that $f^*\Nef^k(\uX_{E'})$ is a face of $\Nef^k(\uX_E)$ and the last claim now follows directly from the injectivity of $f^*$.
\end{proof}
\end{corollary}

\begin{corollary} \label{cor:can_assume_(co)loopless}
    It is sufficient to prove the equivalence of \eqref{eq:main_result_1} and \eqref{eq:main_result_2} for matroids which are both loopless and coloopless.
\begin{proof}
    Let $L$ be a matroid consisting of a loop or a coloop $l$ and let $f:\uX_{E\cup\{l\}}\to\uX_E$ be as before. Since $\bigl(\Elem_i(B_\sigma(L))\bigr)_\sigma$ is a global polynomial that vanishes at the origin for $i>0$, we find $c_i(\umcQ_L)=0$ for $i>0$ and thus, by \cite[Prop.~5.13]{Berget_Eur_Spink_Tseng_2023}, we have
    \begin{align*}
        c_k(\umcQ_{M\oplus L})=\sum_{i=0}^kf^*c_i(\umcQ_M)c_{k-i}(\umcQ_L)=f^*c_k(\umcQ_M).
    \end{align*}
    By Corollary~\ref{cor:lifting_property_of_nef}, the class $c_k(\umcQ_{M\oplus L})$ generates an extremal ray if and only if $c_k(\umcQ_M)$ does. Since, tautologically, $M\oplus L$ is connected up to loops and coloops if and only if $M$ is, this inductively yields the claim.
\end{proof}
\end{corollary}

\begin{proposition}
     Condition \eqref{eq:main_result_2} implies condition \eqref{eq:main_result_1}: Let $M$ be a matroid on $E$ and suppose $1\leq k<\crk(M)-l$, where $l$ is the number of loops of $M$. If $c_k(\umcQ_M)$ generates an extremal ray of $\Nef^k(\uX_E)$, then $M$ is connected up to loops and coloops.
\begin{proof}
     By Corollary~\ref{cor:can_assume_(co)loopless}, we may assume $M$ to be loopless and coloopless. For contraposition, suppose $M=M'\oplus M''$, where $M'$ and $M''$ are matroids on $E'$ and $E''$ respectively. We consider the toric morphisms $f:\uX_E\to\uX_{E'}$ and $g:\uX_E\to\uX_{E''}$ induced from the respective projections of lattices. We infer from~\cite[Prop.~5.13]{Berget_Eur_Spink_Tseng_2023} that
    \begin{align*}
        c_k(\umcQ_M)=\sum_{i=0}^kf^*c_i(\umcQ_{M'})g^*c_{k-i}(\umcQ_{M''}).
    \end{align*}
    Since $1\leq k<\crk(M)=\crk(M')+\crk(M'')$, this sum has at least two non-zero summands. It follows from the fan displacement rule for Minkowski weights from \cite{Fulton_Sturmfels_1997} that these summands are nef as products of pullbacks of nef classes on a complete toric variety. Further, it is clear from the definition of tautological classes of matroids in terms of piecewise polynomial functions on the permutohedral fan that the summands are not multiples. Hence, the ray generated by $c_k(\umcQ_M)$ is not extremal.
\end{proof}
\end{proposition}
To prove the implication \eqref{eq:main_result_1} to \eqref{eq:main_result_2}, let us first introduce more notation concerning flags.
Whenever $\mcF\prec\mcG$, we define $\mcG\sm\mcF$ to be the flag that arises from $\mcG$ by removing all sets that also appear in $\mcF$ (here $\mcF$ can also be a set, interpreted as a flag of length one). If $S\subset E$, write $\mcF/S$ for the flag on $E\sm S$ constructed by deleting the elements of $S$ from every set of $\mcF$, contracting duplicate sets and removing empty sets as well as sets that became equal to $E\sm S$.
If $S$ is a non-empty proper subset of a gap $F_{i+1}\sm F_i$ of $\mcF$, let $\mcF\cup S$ be the flag that arises from $\mcF$ by inserting the set $F_i\cup S$ between $F_i$ and $F_{i+1}$.

Recall that we write $\mcF\in\Sigma(d)$ whenever $\sigma_\mcF\in\Sigma(d)$ for a subfan $\Sigma$ of $\uSigma_E$ and $\Delta(\mcF)$ for $\Delta(\sigma_\mcF)$ whenever $\Delta$ is a Minkowski weight on the permutohedral fan or a subfan.
For $1\leq k\leq\crk(M)$, let us denote $\uDelta_M^k\coloneqq c_k(\umcQ_M)\cap\Delta_{\uSigma_E}$ and $\uSigma_M^k\coloneqq\Sigma_{\uDelta_M^k}$. In view of Example~\ref{exp:c_top}, note that $\uDelta_M^{\crk(M)}$ is the Minkowski weight $\uDelta_M$ and, if $M$ is loopless, $\uSigma_M^{\crk(M)}$ is the Bergman fan $\uSigma_M$.
It is easy to see from Proposition~\ref{prop:Comb_Descr_MW_of_taut_classes} that the maximal cones of $\uSigma_M^k$ are given by
\begin{align} \label{eq:Sigma_M^k(n-k)}
    \hspace{2.2em}\mathclap{\uSigma_M^k(0)=\left\{\mcF=\bigl(F_1 \subsetneq \cdots \subsetneq F_{n-k}\bigr) \,\middle|\, M|F_{i+1}/F_i\text{ is uniform of rank one for all $i\in\gaps(\mcF)$}\right\}.}
\end{align}
\noindent If $M$ is loopless, the tautological class $c_k(\umcQ_M)$ is non-zero for $1\leq k\leq\crk(M)$ and hence $\uSigma_M^k(0)$ is non-empty. This is why, for the remainder of this section, we will implicitly always assume that $M$ is a loopless matroid.

Having this, we can now sketch the proof idea for the last implication of our main theorem. We will follow a strategy similar to the one from Example~\ref{exp:c_top}:
By Proposition~\ref{prop:char_nef_rays}, it will be sufficient to show that every Minkowski weight from $\MW^k(\uSigma_M^k)$ is constant. To this end, we will do the following:
\begin{enumerate}[label=(\Roman*)]
    \item
    Given $\Delta\in\MW^k(\uSigma_M^k)$ and $\mcF\in\uSigma_M^k(1)$, determine what we can infer about the values of $\Delta$ from the balancing condition at $\mcF$.
    \item
    Show by induction on $|E|$ that this information is sufficient to conclude that $\Delta$ is constant on $\uSigma_M^k(0)$.
\end{enumerate}

\subsection*{Part I}
If we want to consider balancing conditions in the fan $\uSigma_M^k$, we should first understand, for a given flag $\mcF\in\uSigma_M^k(1)$, the set of flags $\mcG\in\uSigma_M^k(0)$ with $\mcF\prec\mcG$ or, geometrically speaking, the star of $\sigma_\mcF$ in the fan $\uSigma_M^k$. Since any such flag $\mcG$ is of the form $\mcF\cup S$ for some set $S$ contained in a gap of $\mcF$, this is the same as asking for a description of
\begin{align*}
S_M^k(\mcF)\coloneqq\{S\mid\emptyset\neq S\subsetneq F_{i+1}\sm F_i \text{ for some $i\in\gaps(\mcF)$ such that }\mcF\cup S\in\uSigma_M^k(0)\}.
\end{align*}

\begin{lemma}\label{lem:allowed_moves}
    Let $M$ be a matroid on $E$ and let $1\leq k\leq\crk(M)$. For $\mcF\in\uSigma_M^k(1)$, there holds:
    \begin{itemize}
        \item If $\mcF\in\uSigma_M^{k+1}(0)$, then $S_M^k(\mcF)=\{F_{i+1}\sm(F_i\cup\{e\})\mid i\in\gaps(\mcF), e\in F_{i+1}\sm F_i\}$,
        \item If $\mcF\notin\uSigma_M^{k+1}(0)$, then $S_M^k(\mcF)$ is a proper partition of $F_{i+1}\sm F_i$, where $i\in\gaps(\mcF)$ is the unique index for which $M|F_{i+1}/F_i$ is not uniform of rank one.
    \end{itemize}
\begin{proof}
    The case where $\mcF\in\uSigma_M^{k+1}(0)$ is immediate from \eqref{eq:Sigma_M^k(n-k)}. Thus, let $\mcF\notin\uSigma_M^{k+1}(0)$ and let $i$ be the unique index such that $N = M|F_{i+1}/F_i$ is not uniform of rank one. As $\mcF\in\uSigma_M^k(1)$, the minor $N$ is of rank at most two. If $N$ is of rank zero, it has to be the sum of two loops and thus $S_M^k(\mcF)$ is the partition into the two singletons. If $N$ is of rank one, it has exactly one loop $l$ and $S_M^k(\mcF)$ is the partition into the subsets $\{l\}$ and $F_{i+1}\sm(F_i\cup\{l\})$. Lastly, if $N$ is of rank two, note that $S\in S_M^k(\mcF)$ if and only if $S$ is a flat of rank one of $N$. Clearly, the rank one flats of $N$ cover $F_{i+1}\sm F_i$. The intersection of two distinct rank one flats of $N$ is of rank zero and therefore empty, as $N$ must be loopless because of~$\mcF\in\uSigma_M^k(1)$.
\end{proof}
\end{lemma}

As pointed out in Example~\ref{exp:c_top}, in the case where $k=\crk(M)$, the balancing condition at $\mcF$ directly implied that every Minkowski weight on $\uSigma_M$ of codimension $k$ is constant on cones having $\sigma_\mcF$ as a face. However, if $k<\crk(M)$, this is in general no longer the case:

\begin{example} \label{exp:bal_cond_counterex}
    Consider $E=\{0,1,2,3\}$ and the connected matroid $M$ whose bases are all cardinality two subsets of $E$ apart from $\{0,1\}$. The flag $\mcF=\bigl(\{0,1\}\bigr)$ lies in $\uSigma_M^1(1)$ and the balancing condition for a weight $\Delta\in\MW^1(\uSigma_M^1)$ at $\mcF$ states
    \begin{align*}
        &\Delta(\{0\}\subsetneq\{0,1\})\overline{\mathbf{e}_0} + \Delta(\{1\}\subsetneq\{0,1\})\overline{\mathbf{e}_1}\\
        +\,&\Delta(\{0,1\}\subsetneq\{0,1,2\})\overline{\mathbf{e}_{\{0,1,2\}}} + \Delta(\{0,1\}\subsetneq\{0,1,3\})\overline{\mathbf{e}_{\{0,1,3\}}} \in \Span(\overline{\mathbf{e}_{\{0,1\}}}).
    \end{align*}
    As long as the first two and last two values of $\Delta$ are equal, this equation is satisfied and we cannot immediately conclude that all four of them must be equal. However, this can still be done, for instance, by considering the balancing conditions at the rays corresponding to the sets $\{1\}$, $\{1,2,3\}$, $\{3\}$ and $\{0,1,3\}$. If we identify $\RR^E/\Span(\mathbf{e}_E)$ with $\RR^3$ by mapping $\overline{\mathbf{e}_i}$ to $\mathbf{e}_i$ for $i=1,2,3$, this situation can be visualized as follows:
    \end{example}
    \begin{figure}[h]
        \centering
        \includegraphics[width=0.4\linewidth]{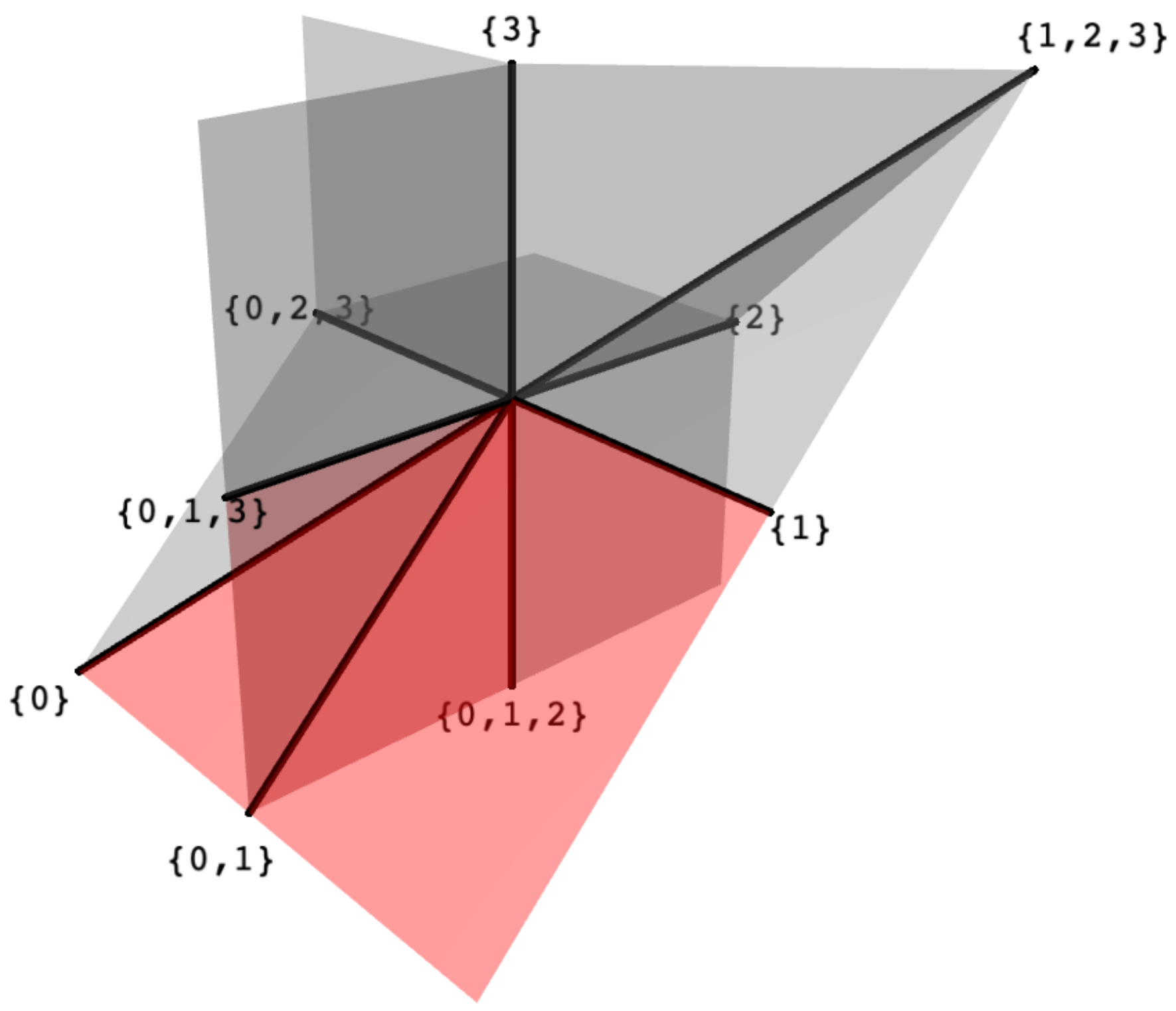}
        \caption{$\uSigma_M^1$ with $\torstar_{\overline{\mathbf{e}_{\{0,1\}}}}(\uSigma_M^1)$ highlighted in \textcolor{red}{red}.}
    \end{figure}

This example suggests that the balancing condition at $\mcF\in\uSigma_M^k(1)$ can only relate the values of $\mcG=\mcF\cup S$ and $\mcG'=\mcF\cup S'$ if $S$ and $S'$ are subsets of a common gap of $\mcF$. This observation leads us the following definition and proposition:

\begin{definition} \label{def:equivalence_relation}
    Let $M$ be matroid on $E$ and let $1\leq k\leq\crk(M)$. 
    We define $\sim_M^k$ to be the smallest equivalence relation on $\uSigma_M^k(0)$ such that $\mcF\cup S\sim_M^k\mcF\cup S'$ whenever $\mcF\in\uSigma_M^k(1)$ and $S,S'\in S_M^k(\mcF)$ are subsets of a common gap of $\mcF$. If no confusion is likely, we will suppress the indices and simply write $\sim$ instead of $\sim_M^k$.
\end{definition}

\begin{proposition}\label{prop:bal_cond_implies_loc_const}
    Let $M$ be a matroid on $E$ and $1\leq k\leq\crk(M)$. Then, any Minkowski weight from $\MW^k(\Sigma_M^k)$ is constant on $\sim$-equivalence classes.
\begin{proof}
    The balancing condition for $\Delta\in\MW^k(\Sigma_M^k)$ at $\mcF=\bigl(F_1\subsetneq\cdots\subsetneq F_{n-k-1}\bigr)$ states that
    \begin{align} \label{eq:bal_con_at_F}
        \sum_{S\in S_M^k(\mcF)}\Delta(\mcF\cup S)(\overline{\mathbf{e}_{(\mcF\cup S)\sm\mcF}}) \in\Span(\sigma_\mcF).
    \end{align}
    As $(\overline{\mathbf{e}_{F_{i+1}\sm F_i}})_{i\in[n-k-2]}$ is a basis of $\Span(\sigma_\mcF)$, there are unique $c_i\in\QQ$ with
    \begin{align*}
        \sum_{S\in S_M^k(\mcF)}\Delta(\mcF\cup S)\overline{\mathbf{e}_S} = \sum_{i=0}^{n-k-2} c_i\overline{\mathbf{e}_{F_{i+1}\sm F_i}}.
    \end{align*}
    If $\mcF\notin\uSigma_M^{k+1}(0)$, then $S_M^k(\mcF)$ is a partition of $F_{l+1}\sm F_l$ for some $l\in\gaps(\mcF)$ by Lemma~\ref{lem:allowed_moves}. If further $l\neq n-k-1$, all values of $\Delta$ appearing above are equal to $c_l$ and $c_i$ is zero for $i\neq l$. In the case $l=n-k-1$, we have $c\coloneqq c_1=\cdots=c_{n-k-2}$ and the appearing values of $\Delta$ are equal to $-c$.
    If $\mcF\in\uSigma_M^{k+1}(0)$, the condition from \eqref{eq:bal_con_at_F} can be expressed as
    \begin{align*}
        \sum_{i\in\gaps(\mcF)}\sum_{j\in F_{i+1}\sm F_i}\Delta\bigl(\mcF\cup(F_{i+1}\sm(F_i\cup\{j\}))\bigr)\overline{\mathbf{e}_j}\in\Span(\sigma_\mcF).
    \end{align*}
    Now one can argue similarly as in the previous case to see that, for a fixed $i\in\gaps(\mcF)$, the values of $\Delta$ on $\mcF\cup(F_{i+1}\sm(F_i\cup\{j\}))$ are equal for $j\in F_{i+1}\sm F_i$. By our definition of the relation~$\sim$, this completes the proof.
\end{proof}
\end{proposition}

\subsection*{Part II} It is evident from Example~\ref{exp:bal_cond_counterex} that, without any additional arguments, proving connectedness in codimension one of $\uSigma_M^k$ does not suffice to establish that all weights from the space $\MW^k(\uSigma_M^k)$ are constant. In view of Proposition~\ref{prop:bal_cond_implies_loc_const}, we should instead prove the stronger statement that all flags from $\uSigma_M^k(0)$ are $\sim$-equivalent. This will require some preparation. Recall that, for a matroid $M$ on $E$, two elements $e,f\in E$ are called \textit{parallel} in $M$, if $\{e,f\}$ is a circuit of $M$. A \textit{parallel class} of $M$ is a maximal subset of $E$ such that any two distinct elements are parallel. A parallel class is called \textit{trivial}, if it contains only one element. Whenever $M$ is connected, a subset $S\subset E$ is called \textit{contractible} in $M$, if $M/S$ is again connected. We say that $e$ is contractible in $M$, if the set $\{e\}$ is contractible in $M$.

\begin{lemma} \label{lem:downarrow_flag}
    Let $M$ be a matroid on $E$, let $1\leq k\leq\crk(M)$ and fix a labeling $E\to[n]$. Let further $\mcG=\bigl(G_1\subsetneq\cdots\subsetneq G_{n-k}\bigr)\in\uSigma_M^k(0)$ and $e\in E$. Then there is a flag \begin{align*}\da_e\mcG=\bigl(\da_e G_1\subsetneq\cdots\subsetneq\da_eG_{n-k}\bigr)\in\uSigma_M^k(0)\end{align*} dependent on the labeling of $E$ such that $\mcG\sim\da_e\mcG$, the set $\da_eG_1$ contains $e$ and, if $l$ is the maximal index with $G_l$ not containing $e$, then $\da_eG_i=G_i$ for all $i>l$. Further, the set $\da_eG_1$ is then contained in the parallel class of $e$.
\begin{proof}
    If $l=0$, put $\da_e\mcG\coloneqq\mcG$. Otherwise, we apply Lemma~\ref{lem:allowed_moves} to $\mcF\coloneqq\mcG\sm G_l$ which shows that there is $S\in S_M^k(\mcF)$ with $S\subset G_{l+1}\sm G_{l-1}$ and $e\in S$. If $\mcF\notin\uSigma_M^{k+1}(0)$, this choice is unique. If $\mcF\in\uSigma_M^{k+1}(0)$, we additionally require the label of the only element of $G_{l+1}\sm(G_{l-1}\cup S)$ to be maximal, which then determines us a unique set. In either case, we find $\mcG\sim\mcF\cup S$ and $e$ lies in the $l$-th set of $\mcF\cup S$. It is clear from the construction that the flag resulting from applying this procedure inductively meets the first three conditions made in the claim. Since $\da_e\mcG\in\uSigma_M^k(0)$, we have $M|\da_eG_1=U_1(\da_eG_1)$ and hence all elements from $\da_eG_1$ are parallel to $e$.
\end{proof}
\end{lemma}

Although the construction of $\da_e\mcG$ was dependent on a labeling of $E$, its only purpose was to be able to make distinguished choices along the proof to end up with a unique flag with the required properties. These properties are the only fact we will be using in following, which is why we will omit mentioning any labelings.

\begin{lemma} \label{lem:existence_breaking_flag}
    Let $M$ be a matroid on $E$ and $1\leq k<\crk(M)$. Then, for any $e\in E$ with non-trivial parallel class $P$, there is a flag $\mcG=\bigl(G_1\subsetneq\cdots\subsetneq G_{n-k}\bigr)\in\uSigma_M^k(0)$ with $G_1=P\sm\{e\}$ and $G_2=P$.
    \begin{proof}
        Since $M$ is loopless by our standing assumption and $\crk(M/P)\geq k-|P|+2$, there is a flag $\bigl(F_1\subsetneq\cdots\subsetneq F_{n-k-2}\bigr)$ in $\uSigma_{M/P}^{k-|P|+2}(0)$. Having this, it is clear that the flag
        \[\mcG=\bigl(P\sm\{e\}\subsetneq P\subsetneq P\cup F_1\subsetneq\cdots\subsetneq P\cup F_{n-k-2}\bigr)\]
        has the desired properties.
    \end{proof}
\end{lemma}

\begin{lemma} \label{lem:move_parallel_elements}
    Let $M$ be a matroid on $E$ and let $1\leq k\leq\crk(M)$. Suppose we have $e\in E$ with non-trivial parallel class $P$ and $\mcG=\bigl(G_1\subsetneq\cdots\subsetneq G_{n-k}\bigr)\in\uSigma_M^k(0)$ such that $G_1\subsetneq P$. For every $i=1,\dots,n-k$, there is a flag $\mcH=\bigl(H_1\subsetneq\cdots\subsetneq H_{n-k}\bigr)\in\uSigma_M^k(0)$ with $\mcG\sim\mcH$ and $H_{i+1}\sm H_i=\{e\}$.
    \begin{proof}
        If $e\notin G_1$, it is follows from $\mcG\in\uSigma_M^k(0)$ that there is an index $j\in\{1,\dots,n-k\}$ such that $G_{j+1}\sm G_j=\{e\}$. If $j\geq2$, we can apply Lemma~\ref{lem:allowed_moves} to $\mcG\sm G_j$ and find $\mcH\in\uSigma_M^k(0)$ with $\mcH\sim\mcG$ and $H_j\sm H_{j-1}=\{e\}$. Similarly, if $j\leq n-k-1$, we apply Lemma~\ref{lem:allowed_moves} to $\mcG\sm G_{j+1}$ and obtain $\mcH\in\uSigma_M^k(0)$ with $\mcH\sim\mcG$ and $H_{j+2}\sm H_{j+1}=\{e\}$. Using this iteratively completes the proof for this case.\\
        Now assume $e\in G_1$. Since $G_1\neq P$, pick $f\in P\sm G_1$. By the previous case, we can assume that $G_2\sm G_1=\{f\}$. Applying Lemma~\ref{lem:allowed_moves} to $\mcG\sm G_1$ shows that $\mcG$ is $\sim$-equivalent to the flag in $\uSigma_M^k(0)$ that arises from $\mcG$ by swapping $e$ and $f$. Thus, we conclude by the previous case.
    \end{proof}
\end{lemma}

\begin{lemma}\label{lem:SuppOfContr}
    Let $M$ be a matroid on $E$ and $1\leq k\leq\crk(M)$. Let further $\mcG=\bigl(G_1\subsetneq\cdots\subsetneq G_{n-k}\bigr)$ be a flag on $E$ such that $G_1$ is a parallel class $P$ of $M$. Then, there holds
    \begin{align*}
        \mcG/P\in\uSigma_{M/P}^{k-|P|+1}(0) &\iff \mcG\in\uSigma_M^k(0).
    \end{align*}
    For two such flags $\mcG,\mcG'\in\uSigma_M^k(0)$, we further have that $\mcG/P\sim_{M/P}^{k-|P|+1}\mcG'/P$ implies $\mcG\sim_M^k\mcG'$.
\begin{proof}
    Since $P$ is a parallel class, the minor $M|P$ is uniform of rank one. Thus, the flag $\mcG$ lies in $\uSigma_M^k(0)$ if and only if the minor $M|G_{i+1}/G_i$ is uniform of rank one for all $i\in\gaps(\mcG)$ greater than zero.
    As this minor can rewritten as $M/P|(G_{i+1}\sm P)/(G_i\sm P)$, the previous condition is equivalent to $\mcG/P\in\uSigma_{M/P}^{k-|P|+1}(0)$. To see the second claim, note that, for any $\mcF\in\uSigma_{M/P}^{k-|P|+1}(1)$, the flag $P\mcF$, constructed by adding $P$ to every set from $\bigl(\emptyset\subsetneq\mcF\bigr)$, is in $\uSigma_M^k(1)$ and has the property $S_{M/P}^{k-|P|+1}(\mcF)\subset S_M^k(P\mcF)$. In view of Definition~\ref{def:equivalence_relation}, this yields the second claim.
\end{proof}
\end{lemma}

\begin{lemma}\label{lem:SuppOfDel}
    Let $M$ be a matroid on $E$ and $1\leq k\leq\crk(M)$. Let further $\mcG=\bigl(G_1\subsetneq\cdots\subsetneq G_{n-k}\bigr)$ be a flag on $E$ such that $G_{n-k}=E\sm\{e\}$ for some $e\in E$. Then there holds
    \begin{align*}
        \mcG\sm e\in\uSigma_{M\sm e}^k(0) &\iff \mcG\in\uSigma_M^k(0).
    \end{align*}
    For two such flags $\mcG,\mcG'\in\uSigma_M^k(0)$, we further have that $\mcG\sm e\sim_{M\sm e}^k\mcG'\sm e$ implies $\mcG\sim_M^k\mcG'$.
    \begin{proof}
        Just as in the previous Lemma, the first claim follows from the fact that there holds $M\sm e|G_{i+1}/G_i=M|G_{i+1}/G_i$ for all $i\in[n-k-1]$. For $\mcF\in\uSigma_{M\sm e}^k(1)$, the flag $\bigl(\mcF\subsetneq E\sm\{e\}\bigr)$ is in $\uSigma_M^k(1)$ and one has $S_{M\sm e}^k(\mcF)\subset S_M^k(\mcF\subsetneq E\sm\{e\})$. This implies the second claim.
    \end{proof}
\end{lemma}

We are now prepared to finish part~II of the proof:

\begin{proposition} \label{prop:connectedness}
    Let $M$ be a connected matroid on $E$ and $1\leq k<\crk(M)$. Then all flags from $\uSigma_M^k(0)$ are $\sim$-equivalent.
    \begin{proof}
        For $|E|\leq 2$, there is nothing to prove. Thus, let $|E|\geq3$, i.e., $n=|E|-1\geq 2$. If $M=U_1(E)$, note that $\uSigma_{U_1(E)}^k(0)$ consists of all flags on $E$ whose sets are of cardinality $k+1,\dots,n$. They can directly be seen to be $\sim$-equivalent by Lemma~\ref{lem:allowed_moves}. Therefore, let us assume that $M$ is not uniform of rank one.
        We will now proceed by induction on the cardinality of the ground set of the matroid $M$.
        For $n=2$, the only connected matroid on $E$ of corank greater than one is $U_1(E)$, which is why we may further assume $n\geq 3$.\\
        \underline{Claim $1$}: Suppose $\mcG,\mcG'\in\uSigma_M^k(0)$ are such that there is a contractible parallel class $P$ of $M$ with $G_1,G_1'\subset P$. If $G_1=G_1'=P$ or $G_1,G_1'\subsetneq P$, then $\mcG\sim_M^k\mcG'$.\\
        \textit{Proof.}\hspace{0.5em}If $G_1=G_1'=P$, we have $\mcG/P\sim_{M/P}^{k-|P|+1}\mcG'/P$ by induction hypothesis, since $M/P$ is again connected and $\crk(M/P)=\crk(M)-(|P|-1)>k-|P|+1$. Hence, $\mcG\sim_M^k\mcG'$ by Lemma~\ref{lem:SuppOfContr}.
        If $G_1,G_1'\subsetneq P$, the parallel class $P$ cannot be trivial. Let $e\in P$ and, by Lemma~\ref{lem:move_parallel_elements}, choose $\mcH,\mcH'\in\uSigma_M^k(0)$ such that $\mcG\sim_M^k\mcH$, $\mcG'\sim_M^k\mcH'$ and $E\sm H_{n-k}=E\sm H_{n-k}'=\{e\}$. If $\crk(M)>k+1$, we infer from the induction hypothesis that $\mcH\sm e\sim_{M\sm e}^k\mcH'\sm e$, since $M\sm e$ is connected ($P$ was non-trivial) and $\crk(M\sm e)=\crk(M)-1>k$. In this case, we obtain $\mcG\sim_M^k\mcG'$ from Lemma~\ref{lem:SuppOfDel}. If however $\crk(M)=k+1$, only one minor from $M|H_{i+1}/H_i$ for $i\in[n-k]$ is a loop. By construction, this loop appears at $i=n-k$. As we assumed that $M$ is not uniform of rank one, there is $f\in E\sm P$ and we know that the first set from $\da_f\mcH$ is the whole parallel class of $f$. The same holds for $\da_f\mcH'$ and thus they are $\sim_M^k$-equivalent by the previous case.\qed\\
        To come back to our original statement, let $\mcG,\mcG'\in\uSigma_M^k(0)$. If there is $e\in E$ such that $\da_eG_1$ and $\da_eG'_1$ are both the entire parallel class of $e$ or are both a proper subset of the parallel class of $e$, then we have $\da_e\mcG'\sim_M^k\da_e\mcG'$ by the above claim and thus $\mcG\sim_M^k\mcG'$. Hence, let us assume that, for every $e\in E$ with contractible parallel class $P$ in $M$, we have $\da_eG_1=P$ if and only if $\da_eG'_1\neq P$. In particular, every element lies in a non-trivial parallel class.\\
        \underline{Claim $2$}: The matroid $M$ admits at least three distinct contractible parallel classes.\\
        \textit{Proof.}\hspace{0.5em}Suppose $M$ has at most two contractible parallel classes. By \cite[Lem.~3.6]{Wu_2005}, the set of contractible elements in the simplification of $M$ is spanning and hence $\rk(M)\leq2$. Since we assumed $M$ to be connected and not uniform of rank one, $M$ is a parallelization of a uniform matroid of rank two with at least three elements. As every element of a uniform matroid of rank two is contractible, $M$ cannot have had only two contractible parallel classes.\qed\\
        By the claim and our assumption, we may choose two parallel classes $P_1$ and $P_2$ of $M$ and $e_i\in P_i$ for $i=1,2$ such that, without loss of generality, there holds $\da_{e_1}G_1=P_1$, $\da_{e_2}G_1=P_2$ and $\da_{e_1}G'_1\neq P_1$.
        By Lemma~\ref{lem:existence_breaking_flag}, there is $\mcH\in\uSigma_{M/P_2}^{k-|P_2|+1}(0)$ with $(\downarrow_{e_2}\mcG)/P_2\sim_{M/P_2}^{k-|P_2|+1}\mcH$ as well as $H_1=P_1\sm\{e_1\}$ and $H_2=P_1$. Thus, we obtain $\downarrow_{e_2}\mcG\sim_M^k P_2\mcH$ from Lemma~\ref{lem:SuppOfContr}, where $P_2\mcH$ is the flag resulting from adding $P_2$ to every set of $\bigl(\emptyset\subsetneq\mcH\bigr)$. For any $f\in P_1\sm\{e_1\}$, we know that $e_1$ is not in the first set of $\da_f P_2\mcH$ by Lemma~\ref{lem:downarrow_flag} and thus we conclude $\da_{f}P_2\mcH\sim_M^k\da_{e_1}\mcG'$ by claim~$1$. Altogether, this implies $\mcG\sim_M^k\mcG'$.
    \end{proof}
\end{proposition}

This completes the proof for the remaining implication of \eqref{eq:main_result_1} to \eqref{eq:main_result_2} from Theorem~\ref{thm:main_result}: Corollary~\ref{cor:can_assume_(co)loopless} reduced to the case where $M$ is loopless and coloopless and, in this situation, the combination of Proposition~\ref{prop:bal_cond_implies_loc_const} and Proposition~\ref{prop:connectedness} shows that every Minkowski weight from $\MW^k(\uSigma_M^k)$ is constant for any $1\leq k<\crk(M)$. Applying Proposition~\ref{prop:char_nef_rays} proves the extremality of the ray generated by $c_k(\umcQ_M)$ in $\Nef^k(\uX_E)$.

As mentioned earlier, connectedness in codimension one of $\uSigma_M^k$ is implied by $\sim$-equivalence of all cones from $\uSigma_M^k(0)$. Thus, we have the following corollary:

\begin{corollary}
    For a connected matroid $M$ and $1\leq k\leq\crk(M)$, the fan $\uSigma_M^k$ is connected in codimension one.
\end{corollary}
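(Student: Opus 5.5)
The corollary covers $1\leq k\leq\crk(M)$, so I would split into the cases $k<\crk(M)$ and $k=\crk(M)$. Throughout, $M$ is loopless by the standing assumption of this section; a connected matroid on at least two elements is automatically loopless.

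For $1\leq k<\crk(M)$ the main step is to show that $\sim$-equivalence implies adjacency through codimension-one faces. By Definition~\ref{def:equivalence_relation}, $\sim$ is generated by relations $\mcF\cup S\sim\mcF\cup S'$ with $\mcF\in\uSigma_M^k(1)$ and $S,S'\in S_M^k(\mcF)$. In each such relation both maximal cones $\sigma_{\mcF\cup S}$ and $\sigma_{\mcF\cup S'}$ lie in $\uSigma_M^k$ and share the common face $\sigma_\mcF$, which has codimension one in each of them. Hence any chain of generating relations linking $\mcG$ to $\mcG'$ is a chain of maximal cones of $\uSigma_M^k$ in which consecutive cones meet in a codimension-one face. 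This is exactly a path witnessing connectedness in codimension one. Proposition~\ref{prop:connectedness} says that all flags in $\uSigma_M^k(0)$ are $\sim$-equivalent, so the claim follows.

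For $k=\crk(M)$ the fan $\uSigma_M^{\crk(M)}$ is the Bergman fan $\uSigma_M$. Its connectedness in codimension one is the fact recalled in Example~\ref{exp:c_top}: it follows from the shellability of the order complex of the lattice of flats \cite{Bjoerner_1992}, since a shellable pure complex is connected in codimension one (strongly connected). Alternatively, I could verify directly that the generating moves of $\sim$ at $k=\crk(M)$ are still valid adjacencies and cite the flat-exchange argument for full flags of flats. The shellability citation is cleaner.

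I do not expect a serious obstacle. The only points to check are that each generating relation of $\sim$ genuinely involves a shared face of codimension one lying in the same fan, which is immediate from the definition of $S_M^k(\mcF)$, and that the boundary case $k=\crk(M)$ falls outside the range of Proposition~\ref{prop:connectedness} and so must be handled by the Bergman fan argument.
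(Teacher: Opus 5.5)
Your proposal is correct and follows the paper's argument: the paper likewise deduces the corollary from Proposition~\ref{prop:connectedness}, using the observation that each generating relation $\mcF\cup S\sim\mcF\cup S'$ of $\sim$ pairs two maximal cones of $\uSigma_M^k$ that share the codimension-one face $\sigma_\mcF$. Your explicit treatment of the boundary case $k=\crk(M)$ via the Bergman fan and Bj\"orner's shellability \cite{Bjoerner_1992} is a step the paper leaves implicit (Proposition~\ref{prop:connectedness} only covers $k<\crk(M)$, and the paper only recalls the Bergman fan fact in Example~\ref{exp:c_top}), so your version is, if anything, slightly more complete.
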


In view of Example~\ref{exp:c_1}, our Theorem~\ref{thm:main_result} also recovers the result from \cite[Thm.~2.1.5]{Nguyen_1978}:

\begin{corollary}
    A matroid $M$ is connected up to loops and coloops if and only if its rank function $\rk_M$ generates an extremal ray of the cone of submodular functions modulo modular functions.
\end{corollary}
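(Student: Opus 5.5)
The statement to prove is the corollary that a matroid $M$ is connected up to loops and coloops if and only if $\rk_M$ generates an extremal ray of the cone of submodular functions modulo modular functions. My plan is to reduce it to the case $k=1$ of Theorem~\ref{thm:main_result}, using the dictionary from Example~\ref{exp:c_1} and the identification of $\Nef^1(\uX_E)$ with submodular functions modulo modular functions from the introduction.

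\textbf{Step 1: translate into Chow-theoretic language.} By \cite[Cor.~2.5]{Batyrev_Blume_2011}, the map $c\mapsto\sum_{\emptyset\subsetneq S\subsetneq E}c_Sx_S$ induces a linear isomorphism from the cone of submodular functions modulo modular functions onto $\Nef^1(\uX_E)$. Before applying it, one normalizes $c_\emptyset=c_E=0$ by subtracting a modular function.
\begin{itemize}
\item For $c=\rk_M$, subtract the modular function $S\mapsto\frac{\rk(M)}{n+1}|S|$; this already gives value $0$ at $\emptyset$ and $E$.
\item Example~\ref{exp:c_1} then shows that the class of $\rk_M$ maps to $c_1(\umcQ_M)$.
\end{itemize}
A linear isomorphism of cones preserves extremal rays, so $\rk_M$ is extremal if and only if $c_1(\umcQ_M)$ generates an extremal ray of $\Nef^1(\uX_E)$.

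\textbf{Step 2: apply the theorem when $k=1$ is allowed.} Let $l$ be the number of loops and $l'$ the number of coloops. If $1<\crk(M)-l$, then $k=1$ is permitted in Theorem~\ref{thm:main_result}, and its equivalence \eqref{eq:main_result_1}$\Leftrightarrow$\eqref{eq:main_result_2} is exactly the claim. Similarly, if $1<\rk(M)-l'$, we use \eqref{eq:main_result_3} with $k'=1$ together with $c_1(\umcS^\vee_M)=c_1(\umcQ_M)$ from Example~\ref{exp:c_1}.

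\textbf{Step 3: the degenerate cases (main obstacle).} These are the cases $\crk(M)-l\le1$ and $\rk(M)-l'\le1$. Removing loops and coloops gives a matroid $N$ with $\crk(N)\le1$ and $\rk(N)\le1$, so $N$ is empty or $U_1(E_N)$ with $|E_N|=2$. In particular $M$ is connected up to loops and coloops. By Corollary~\ref{cor:lifting_property_of_nef} and the computation in Corollary~\ref{cor:can_assume_(co)loopless}, $c_1(\umcQ_M)$ is the pullback of $c_1(\umcQ_N)$.
\begin{itemize}
\item If $N$ is empty, then $c_1(\umcQ_M)=0$ and $\rk_M$ is modular. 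This case must be treated by the convention of the statement: either the zero class is regarded as degenerate, or one requires $\rk_M$ to be non-modular.
\item If $N=U_1$ on two elements, then $\uX_{E_N}=\PP^1$, and $c_1(\umcQ_N)$ is the point class. It generates the one-dimensional nef cone, so it is extremal, and pulling back preserves extremality.
\end{itemize}
This bookkeeping with trivial or zero classes is the only place where care is needed; everything else is a direct citation of Theorem~\ref{thm:main_result}.
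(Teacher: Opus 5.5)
Your proposal is correct and takes the paper's route. The paper simply observes that, through the dictionary of Example~\ref{exp:c_1}, the case $k=1$ (or $k'=1$) of Theorem~\ref{thm:main_result} is Nguyen's statement. Your Step~3 usefully makes explicit what the paper leaves unsaid: the boundary case where both $\crk(M)-l\le 1$ and $\rk(M)-l'\le 1$ lies outside the theorem's range. You correctly reduce it by pullback to two situations. One is $U_1$ on two elements, where $c_1$ is the point class on $\PP^1$ and hence extremal. The other is a matroid made only of loops and coloops, where $\rk_M$ is modular and the statement holds only under a non-degeneracy convention.
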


\section{Extension to the stellahedral variety} \label{sec:main_result_stella}

The \textit{stellahedron} is the lattice polytope in $\ZZ^E$ given by
\begin{align*}
    \Pi_E\coloneqq\{x\in\RR_{\geq0}^E\mid\exists y\in\uPi_E':y+\mathbf{e}_E-x\in\RR_{\geq0}^E\}.
\end{align*}
It is clear from this description that $\Pi_E$ has the translated permutohedron $\uPi_E'+\mathbf{e}_E$ as a facet. One can also give another description of $\Pi_E$ in terms of simplices, similar to the one for the permutohedron from \eqref{eq:Perm_as_hypersimplices}:
\begin{align} \label{eq:Stell_as_hypersimplices}
\Pi_E=\sum_{i=1}^{n+1}\Delta_{E,i}\text{, where }\Delta_{E,i}=\{x\in[0,1]^E\mid x_0+\cdots+x_n\leq i\}.
\end{align}
By definition, the \textit{stellahedral fan} $\Sigma_E$ is the normal fan of $\Pi_E$. As the permutohedron is a facet of the stellahedron, one could expect that a cone in $\Sigma_E$ can be described by a flag and some additional data: For a subset $I\subset E$ and a possibly empty flag $\mcF$ consisting of proper subsets of the set $E$, a pair $(I,\mcF)$ is called \textit{compatible}, if $I$ is contained in every set from $\mcF$. In this case, one writes $I\leq\mcF$ instead of $(I,\mcF)$. This definition allows us give the description of the cones of the stellahedral fan $\Sigma_E$ from \cite{Braden_Huh_Matherne_Proudfoot_Wang_2022}:

\begin{proposition}
    The set $\Sigma_E(k)$ of $k$-codimensional cones of $\Sigma_E$ is in bijection with the set of compatible pairs $I\leq\mcF$, where $\mcF=\bigl(F_0\subsetneq\cdots\subsetneq F_l\bigr)$ and $|I|+l+1=n+1-k$. The correspondence is given by
    \begin{align*}
        I\leq\mcF\mapsto \sigma_{I\leq\mcF}\coloneqq\cone(\mathbf{e}_i\mid i\in I)+\cone(-\mathbf{e}_{E\sm F_i}\mid i\in[l]).
    \end{align*}
\end{proposition}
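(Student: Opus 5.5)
The plan is to read off the cone structure directly from the normal fan of the Minkowski-sum description \eqref{eq:Stell_as_hypersimplices}. The normal fan of a Minkowski sum is the common refinement of the normal fans of the summands. So a cone of $\Sigma_E$ is determined by recording, for a weight vector $w\in\RR^E$, which face of each $\Delta_{E,i}$ is $w$-maximal.

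\textbf{Step 1 (faces maximized by $w$).} Fix $w$ and sort its coordinates into level sets.
\begin{itemize}
\item The coordinates with $w_j>0$, taken in decreasing order, determine the maximizing faces of $\Delta_{E,i}$: for small $i$ the best choice is to fill the $i$ largest positive coordinates.
\item Coordinates with $w_j<0$ are never used.
\item Coordinates with $w_j=0$ are optional.
\end{itemize}
The combinatorial data of $w$ is therefore the ordered partition of the positive part into level sets, together with the zero set $I=\{j\mid w_j=0\}$.

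\textbf{Step 2 (translate to a compatible pair).} Write the positive level sets cumulatively from the top, and take complements. This gives a chain of complements $E\sm(\text{top level sets})$, which is a flag $F_0\subsetneq\cdots\subsetneq F_l$ of proper subsets of $E$. Every such set contains $I$ together with all negative coordinates.

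Then check that the relatively open cone of such $w$ equals the relative interior of
\[
\cone(\mathbf{e}_i\mid i\in I)+\cone(-\mathbf{e}_{E\sm F_i}\mid i\in[l])
\]
after the sign convention is fixed. Here one has to settle whether the zero set or the set of sign-distinguished coordinates plays the role of $I$. The paper's generators $\mathbf{e}_i$ ($i\in I$) and $-\mathbf{e}_{E\sm F_i}$ suggest writing $w$ as a nonnegative combination with $I$ the support of the positive part relative to the bottom level. I would carry this out by writing $w$ explicitly in that basis.

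Compatibility $I\subset F_0$ comes from $I$ being disjoint from all top blocks $E\sm F_i$.

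\textbf{Step 3 (simpliciality and dimension count).} The generators $\{\mathbf{e}_i\}_{i\in I}\cup\{\mathbf{e}_{E\sm F_i}\}$ are linearly independent, since after taking differences they have disjoint supports: $I\subset F_0$, and the sets $F_{i+1}\sm F_i$ are disjoint. So the cone $\sigma_{I\leq\mcF}$ has dimension $|I|+l+1$. In the ambient $\RR^E$ of dimension $n+1$, this gives codimension $k=n+1-|I|-l-1$, which is exactly the stated relation.

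Distinct pairs give distinct relatively open cones, because the level-set data of $w$ is recovered from the pair. Completeness follows because every $w$ has such data. Together these give the bijection.

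\textbf{Main obstacle.} I expect the hardest part to be matching sign conventions so that the generators are exactly $\mathbf{e}_i$ and $-\mathbf{e}_{E\sm F_i}$. A closely related point is handling the case $F_0$ versus $\emptyset$ correctly, which decides whether the flag starts at index $0$. Alternatively, I would simply cite \cite{Braden_Huh_Matherne_Proudfoot_Wang_2022}, where this description is established.
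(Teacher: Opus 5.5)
\textbf{Verdict.} The paper does not prove this statement; it imports it from \cite{Braden_Huh_Matherne_Proudfoot_Wang_2022}, which is also your fallback. Your direct route via the common refinement of the normal fans of the $\Delta_{E,i}$ is workable. Step~3 is also correct: the vectors $\mathbf{e}_i$ ($i\in I$), $\mathbf{e}_{F_{j+1}\sm F_j}$ ($j<l$) and $\mathbf{e}_{E\sm F_l}$ have pairwise disjoint nonempty supports, so $\dim\sigma_{I\leq\mcF}=|I|+l+1$. However, the matching step you defer as the ``main obstacle'' is the actual content of the proposition, and the choices you do commit to are wrong.

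\textbf{First gap: the sign convention.} Recording $w$-maximal faces produces the outer normal fan, which is $-\Sigma_E$ and not $\Sigma_E$. The outer normal cone of the vertex $0\in\Pi_E$ is the negative orthant. That orthant is not a cone of $\Sigma_E$: its rays $-\mathbf{e}_i=-\mathbf{e}_{E\sm(E\sm\{i\})}$ would require the pairwise incomparable sets $E\sm\{i\}$ to form a flag. By contrast, the positive orthant $\sigma_{E\leq\emptyset}$ is a cone of $\Sigma_E$. The stated generators require the inward-normal convention of \cite{Cox_Little_Schenck_2011}, i.e.\ $w$-minimal faces. Under that convention the facet $x_i\ge 0$ gives the ray $\mathbf{e}_i$, and the facet bounding $\sum_{j\in E\sm F}x_j$ from above gives $-\mathbf{e}_{E\sm F}$.

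\textbf{Second gap: $I$ is not the zero set of $w$.} When minimizing over $\Delta_{E,i}$, coordinates with $w_j>0$ are never used, coordinates with $w_j<0$ are filled greedily from the most negative level, and coordinates with $w_j=0$ are optional. Since $i$ runs up to $n+1$ and $\Delta_{E,n+1}=[0,1]^E$, the zero set is also detected. So the data of $w$ consists of the \emph{unordered} set $\{w>0\}$, the set $\{w=0\}$, and the ordered level sets of the negative part. To see this matches the cones, take $w=\sum_{i\in I}a_i\mathbf{e}_i-\sum_{j=0}^{l}b_j\mathbf{e}_{E\sm F_j}$ with all $a_i,b_j>0$ and $F_{l+1}=E$. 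Then:
\begin{itemize}
\item $w_i=a_i$ for $i\in I$;
\item $w=0$ on $F_0\sm I$;
\item $w=-(b_0+\cdots+b_j)$ on $F_{j+1}\sm F_j$.
\end{itemize}
Hence $I=\{w>0\}$ and $F_0=\{w\ge 0\}$. The zero set is $F_0\sm I$, and $F_1\sm F_0,\dots,E\sm F_l$ are the negative level sets in decreasing order. That $I$ carries no flag structure is exactly the fact that never-used coordinates may take arbitrary independent values; this is what produces the separate rays $\mathbf{e}_i$.

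With your assignment $I=\{w=0\}$, the locus you describe has $w_i=0$ on $I$. So it cannot be the relative interior of a cone containing the $\mathbf{e}_i$, and its dimension is governed by the never-used coordinates rather than by $|I|$. Concretely, take $|E|=2$ and $w=(-1,-1)$, whose only maximizer on $\Pi_E$ is the vertex $0$. Your recipe returns $\emptyset\leq\emptyset$, the zero cone.

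\textbf{How to repair it.} Switch to minimization and set $I=\{w>0\}$, with the $F_j$ as above. Then the rest of your outline goes through: injectivity holds because the pair is recovered from the data, and surjectivity holds because every $w$ has such data.
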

Since $\Sigma_E$ is smooth, it induces a complete smooth toric variety $X_E\coloneqq X_{\Sigma_E}$, called the \textit{stellahedral variety}. As $\uPi_E$ is a facet of $\Pi_E$, it comes with a toric embedding $\iota_E:\uX_E\to X_E$.
Just as for the permutohedral variety, there are explicit combinatorial characterizations for the Chow ring $\CH^\bullet(X_E)$ that were exploited by the authors of \cite{Braden_Huh_Matherne_Proudfoot_Wang_2022} and \cite{Eur_Huh_Larson_2023} to define the \textit{augmented Bergman class} $\Delta_M\cap[X_E]$ and \textit{augmented tautological sub (resp. quotient) Chern classes} $c_k(\mcS_M)$ (resp. $c_k(\mcQ_M)$) of a matroid $M$ on $E$.
The key ingredient for the proof of the extension of our main result to the stellahedral variety will again be the combinatorial formula for the values of the Minkowski weights associated to augmented tautological Chern classes from \cite[Prop.~5.9]{Eur_Huh_Larson_2023}. Using the notation $\Delta_M^k\coloneqq c_k(\mcQ_M)\cap\Delta_{\Sigma_E}$, this can be stated as follows:

\begin{proposition} \label{prop:Comb_Descr_MW_of_taut_classes_stella}
    Let $M$ be a matroid on $E$ and $k\in[n]$. Let $I\leq\mcF$ be a compatible pair with $\mcF=\bigl(F_0\subsetneq\cdots\subsetneq F_l\bigr)$ and $l=n-k-|I|$. We put $F_{l+1}=E$ and interpret $F_0$ as $E$ if $\mcF$ is empty. Then, we have
    \begin{align*}
        \Delta_M^k(\sigma_{I\leq\mcF})
        &=\begin{cases}
            1 & \begin{aligned}
            &\text{$F_0\subset\cl_M(I)$ and for $i\in[l]$ exactly $l+\rk_M(I)-\rk(M)+1$} \\ &\text{minors $M|F_{i+1}/F_i$ are loops and the rest are uniform of rank one}
            \end{aligned}\\
            0 & \text{otherwise}
        \end{cases}.
    \end{align*}
    In particular, $c_k(\mcQ_M)$ is nef.
\end{proposition}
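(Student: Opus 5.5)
We plan to prove the formula by restricting $c_k(\mcQ_M)$ to the torus-invariant subvariety $V(\sigma_{I\leq\mcF})$, which has dimension $k$. There it factors into a product of smaller stellahedral and permutohedral varieties. We then use the Whitney sum formula to reduce to top Chern classes of tautological bundles of minors. By \eqref{eq:nef_for_toric}, the value $\Delta_M^k(\sigma_{I\leq\mcF})$ equals $\int_{V(\sigma_{I\leq\mcF})}c_k(\mcQ_M)|_{V(\sigma_{I\leq\mcF})}$, so it suffices to compute this degree.

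\emph{Step 1 (geometry of the orbit closure).} From the description of the cones $\sigma_{I\leq\mcF}$, the star of $\sigma_{I\leq\mcF}$ in $\Sigma_E$ is a product of the stellahedral fan on $F_0\sm I$ and the permutohedral fans on the gaps $F_{i+1}\sm F_i$ for $i\in[l]$. Hence
\[
V(\sigma_{I\leq\mcF})\cong X_{F_0\sm I}\times\prod_{i=0}^{l}\uX_{F_{i+1}\sm F_i}.
\]
This is the standard recursive structure of the stellahedral fan from \cite{Braden_Huh_Matherne_Proudfoot_Wang_2022}. Its dimension is $|F_0\sm I|+\sum_i(|F_{i+1}\sm F_i|-1)=n+1-|I|-(l+1)=k$.

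\emph{Step 2 (restriction of the $K$-class).} We would show that, in $K$-theory, $[\mcQ_M]$ restricted to this product is the exterior sum of $[\mcQ_{M|F_0/I}]$ on the stellahedral factor and $[\umcQ_{M|F_{i+1}/F_i}]$ on the permutohedral factors, up to trivial summands. To do this, we would compare the piecewise-exponential (equivariant localization) descriptions of \cite{Eur_Huh_Larson_2023}: at a torus-fixed point of the product, the lex-first basis of $M$ decomposes as a union of lex-first bases of the minors along the chain $I\subset F_0\subsetneq\cdots\subsetneq F_l\subsetneq E$. This is the analogue of \cite[Prop.~5.13]{Berget_Eur_Spink_Tseng_2023}, and we expect it to be the main obstacle. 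The delicate points are the augmentation direction (the rays $\mathbf{e}_i$ with $i\in I$) and checking that the elements of $I$ contribute only trivial summands.

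\emph{Step 3 (Whitney formula and degree count).} Granting Step 2, $c_k(\mcQ_M)$ restricts to a sum of products of Chern classes of the factors. Only the summand that is of top degree on every factor survives integration. On the stellahedral factor, the bundle $\mcQ_{M|F_0/I}$ has rank $|F_0\sm I|-\rk(M|F_0/I)$. Its top-dimensional Chern class is therefore zero unless $\rk_M(F_0)=\rk_M(I)$, that is, $F_0\subset\cl_M(I)$. In that case $M|F_0/I$ consists of loops, the bundle is the full trivial-rank quotient, and its top Chern class has degree $1$; this is the augmented Bergman class of a matroid of loops, a point.

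On a permutohedral factor $\uX_{F_{i+1}\sm F_i}$, Proposition~\ref{prop:Comb_Descr_MW_of_taut_classes} applied with the empty flag gives the following. A top-degree class $c_{|G|-1}(\umcQ_N)$, where $G$ is the gap and $N=M|F_{i+1}/F_i$, has degree $1$ exactly when $N$ is uniform of rank one. When $N$ is a single loop, the factor is a point and the degree-$0$ class is $1$. In every other case the contribution is $0$, since $\crk(N)$ is too small or $N$ has the wrong shape.

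\emph{Step 4 (counting the loops).} It remains to check that the number of loop minors is forced. The ranks of the minors along the chain sum to $\rk(M)-\rk_M(F_0)=\rk(M)-\rk_M(I)$. Each minor is either a loop (rank $0$) or uniform of rank one, and there are $l+1$ of them. So exactly $l+1-\rk(M)+\rk_M(I)$ of them are loops, which is the count in the statement. Nefness then follows immediately from \eqref{eq:nef_for_toric}, because every value is $0$ or $1$.
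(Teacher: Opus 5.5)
The paper gives no argument of its own for this statement. It is imported verbatim from \cite[Prop.~5.9]{Eur_Huh_Larson_2023} and used as a black box, so there is no in-paper proof to compare against. Judged on its own, your outline is sound. It is the natural augmented analogue of the restriction argument behind Proposition~\ref{prop:Comb_Descr_MW_of_taut_classes} in \cite{Berget_Eur_Spink_Tseng_2023}. The product decomposition of $V(\sigma_{I\leq\mcF})$ is correct, and Step~3 correctly isolates the single term that is top-degree on every factor. Step~4 correctly shows that the loop count in the statement is forced once every minor is a loop or uniform of rank one. Compared with the citation, your route explains where the two conditions come from: $F_0\subset\cl_M(I)$ is exactly the rank obstruction on the stellahedral factor, and the rest is Proposition~\ref{prop:Comb_Descr_MW_of_taut_classes} applied factorwise.

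Two points need more than you wrote.

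First, Step~2 carries all the content and is only asserted. To carry it out, use the localization of $[\mcQ_M]$ at a maximal cone $\sigma_{J\leq\mcG}$. In the conventions of the Remark at the end of Section~\ref{sec:main_result_stella}, this localization consists of $|J|-\rk_M(J)$ trivial characters, together with one character $T_i^{-1}$ for each $i\in E\sm J$ outside the lex-first basis of $M/J$, taken for the order in which $\mcG$ adds elements. At a fixed point of $V(\sigma_{I\leq\mcF})$ you have $I\subset J\subset F_0$, and $\mcG$ refines $\mcF$. So this basis splits into lex-first bases of $M|F_0/J$ and of the minors $M|F_{i+1}/F_i$. Comparing with the localization of $[\mcQ_{M|F_0/I}]$ then leaves exactly $|I|-\rk_M(I)$ trivial characters at every fixed point. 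This agrees with the rank count $\crk(M)-\crk(M|F_0/I)-\sum_i\crk(M|F_{i+1}/F_i)=|I|-\rk_M(I)$, which is a useful check.

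Second, in Step~3 the claim that $c_{|G|}(\mcQ_N)$ has degree $1$ for an all-loops matroid $N$ on $G=F_0\sm I$ needs an actual argument. This is precisely where the augmented case departs from the permutohedral one: there, the quotient bundle of an all-loops matroid is trivial and its positive Chern classes vanish. Iterating the decomposition in that Remark gives $[\mcQ_N]=\sum_{j\in G}[\mathcal L_j]$. Each $\mathcal L_j$ is pulled back from the $j$-th factor of $(\PP^1)^G$ along the birational toric morphism $X_G\to(\PP^1)^G$; this morphism exists because every stellahedral cone lies in a single orthant. Hence $c_{|G|}(\mcQ_N)$ is the pullback of a point class and has degree~$1$. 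Alternatively, you can cite \cite[Cor.~5.11]{Eur_Huh_Larson_2023} for $N$, after checking that it covers matroids with loops.
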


Having this, we are ready to prove our second main result, the extension of Theorem~\ref{thm:main_result} to the stellahedral variety:

\begin{mainthmm} \label{thm:main_result_stella}
If $M$ is loopless, the three conditions from Theorem~\ref{thm:main_result} are equivalent to
\begin{enumerate}[start = 4]
    \item $c_k(\mcQ_M)$ generates an extremal ray of $\Nef^k(X_E)$. \label{eq:main_result_4}
\end{enumerate}
\end{mainthmm}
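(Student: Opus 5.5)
Since $M$ is loopless, $l=0$ and the range is $1\le k<\crk(M)$. The plan is to relate $\Delta_M^k$ on $\Sigma_E$ to $\uDelta_M^k$ on $\uSigma_E$ via the embedding $\iota_E:\uX_E\to X_E$ and the structure of the star of the ray $\rho_0$ spanned by $-\mathbf{e}_E$, i.e.\ the cones $\sigma_{\emptyset\leq\mcF}$ with $F_0=\emptyset$. By Proposition~\ref{prop:char_nef_rays} it suffices, for \eqref{eq:main_result_1}$\Rightarrow$\eqref{eq:main_result_4}, to show that every $\Delta\in\MW^k(\Sigma_{\Delta_M^k})$ is a multiple of $\Delta_M^k$. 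For the converse, \eqref{eq:main_result_4}$\Rightarrow$\eqref{eq:main_result_1}, I would reuse the argument of the proposition showing \eqref{eq:main_result_2}$\Rightarrow$\eqref{eq:main_result_1}. If $M=M'\oplus M''$ with both parts of positive corank after reducing coloops, then \cite{Eur_Huh_Larson_2023} gives a Whitney-sum formula $c_k(\mcQ_M)=\sum_i f^*c_i(\mcQ_{M'})\,g^*c_{k-i}(\mcQ_{M''})$ for the projections $X_E\to X_{E'},X_{E''}$, which are toric, since coordinate projections respect the stellahedral fans. This splits the class into at least two non-proportional nef summands. Coloops are handled as in Corollary~\ref{cor:can_assume_(co)loopless}, using the analogue of Corollary~\ref{cor:lifting_property_of_nef} for $X_{E\cup\{e\}}\to X_E$.

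For the main direction, I read off the support from Proposition~\ref{prop:Comb_Descr_MW_of_taut_classes_stella}. The cones with $I=\emptyset$ satisfy $F_0\subset\cl_M(\emptyset)=\emptyset$ (loopless), so with $F_0=\emptyset$ they are exactly the flags $\mcF'=(F_1\subsetneq\dots\subsetneq F_l)$ in $\uSigma_M^k(0)$, joined with $\rho_0$. Hence $\Delta_M^k$ restricted to $\torstar_{\rho_0}$ is identified with $\uDelta_M^k$, i.e.\ $\iota_E^*c_k(\mcQ_M)=c_k(\umcQ_M)$ at the level of weights on the star. Step one: take $\Delta\in\MW^k_{\ge0}(\Sigma_{\Delta_M^k})$. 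The balancing conditions at codimension-one cones containing $\rho_0$ only involve cones in the star, so the restriction of $\Delta$ to the star lies in $\MW^k(\uSigma_M^k)$. By Propositions~\ref{prop:bal_cond_implies_loc_const} and~\ref{prop:connectedness} it is constant, and after rescaling $\Delta-\Delta_M^k$ vanishes on all cones with $I=\emptyset$.

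Step two, the main obstacle: propagate to cones with $I\neq\emptyset$. I would induct on $|I|$. For a support cone $I\leq\mcF$ with $I\ne\emptyset$, pick $i\in I$ and consider the codimension-one face obtained by dropping $\mathbf{e}_i$ from $I$. Its balancing condition involves $\sigma_{I\leq\mcF}$ itself, plus neighbours obtained by inserting a set into $\mcF$ or adding another element to $I\sm\{i\}$. I want the neighbours to have fewer elements in $I$, or to be cones already known. This needs a local analysis analogous to Lemma~\ref{lem:allowed_moves}: determine all support cones containing a given $(I\sm\{i\})\leq\mcF$ using the condition $F_0\subset\cl_M(I)$ and the rank count $l+\rk_M(I)-\rk(M)+1$. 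The key point to verify is that the coefficient of $\mathbf{e}_i$ (independent of the span of the face) can only be cancelled by the cone where $F_0$ is lowered to admit the insertion of a new smallest set avoiding $i$, or by the cone using $-\mathbf{e}_{E\sm F}$ directions. In either case the value at $\sigma_{I\leq\mcF}$ is forced to equal a value on a cone with smaller $|I|$.

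If the direct induction fails, my fallback is the duality/identification $\Delta_M^k\cap$ star of $\mathbf{e}_i$ $\cong$ weights for $M/i$ on $X_{E\sm i}$ (the star of the ray $\mathbf{e}_i$ in $\Sigma_E$ is $\Sigma_{E\sm i}$). I would then induct on $|E|$, noting that $M/i$ is loopless after removing the parallel class of $i$ and still connected for suitably contractible $i$ (Claim~2 in Proposition~\ref{prop:connectedness}). Overlaps between stars of different rays then glue the constants together. Once every value is forced, $\Delta\in\QQ_{\ge0}\Delta_M^k$, proving extremality. The same argument with $k=\crk(M)$ yields the augmented Bergman statement.
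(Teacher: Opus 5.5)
For \eqref{eq:main_result_1}$\Rightarrow$\eqref{eq:main_result_4} your route is the paper's. The support cones with $I=\emptyset$ are exactly the $\emptyset\leq(\emptyset\subsetneq\mcG)$ with $\mcG\in\uSigma_M^k(0)$ (Lemma~\ref{lem:perm_is_star_in_stella}), and on these Theorem~\ref{thm:main_result} forces proportionality. One then lowers $|I|$ by balancing at $(I\sm\{i\})\leq\mcF$; this is the content of Lemma~\ref{lem:S_M^k_and_E_M^k}, Proposition~\ref{prop:bal_cond_implies_loc_const_stella} and Lemma~\ref{lem:connectedness_approx}.

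As written, your Step two is only announced, not proved, but it closes in a few lines and the fallback is unnecessary. Put $J=I\sm\{i\}$ and $\tau=\sigma_{J\leq\mcF}$. The generators $\mathbf{e}_j$ ($j\in J$) and $\mathbf{e}_{E\sm F_m}$ of $\Span(\tau)$ all have zero $i$-th coordinate. Among the rays in the balancing condition at $\tau$, only two kinds have non-zero $i$-th coordinate: $\mathbf{e}_i$, and the $-\mathbf{e}_{E\sm S}$ with $J\subset S\subsetneq F_0$ and $i\notin S$. Sets inserted above $F_0$ contain $i$, so your second alternative never occurs. Reading off the $i$-th coordinate gives, for every $\Delta\in\MW^k(\Sigma_M^k)$,
\[
\Delta(I\leq\mcF)=\sum_{\substack{J\subset S\subsetneq F_0\\ i\notin S}}\Delta\bigl(J\leq(S\subsetneq\mcF)\bigr).
\]
Applied to $\Delta-c\Delta_M^k$, this linear identity already drives the induction on $|I|$. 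The base case is $S=\emptyset$, since $M$ is loopless. The support computation of Lemma~\ref{lem:S_M^k_and_E_M^k} only sharpens the identity to a single surviving $S$.

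One small fix in Step one: Proposition~\ref{prop:connectedness} assumes $M$ connected. There you should either cite Theorem~\ref{thm:main_result} together with Proposition~\ref{prop:char_nef_rays} (you only need non-negative $\Delta$), or first strip coloops using Corollary~\ref{cor:lifting_property_of_nef_stella}.

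For \eqref{eq:main_result_4}$\Rightarrow$\eqref{eq:main_result_1} you take a genuinely different route. The paper reads both implications off the claim that the restriction $\MW^k(\Sigma_M^k)\to\MW^k(\uSigma_M^k)$ is an isomorphism. The two results it combines establish injectivity, whereas this direction needs non-negative weights on $\uSigma_M^k$ to lift to $\Sigma_M^k$. Your Whitney-sum decomposition exhibits two non-proportional nef summands directly on $X_E$, so no lifting is needed.

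The price is two facts you only assert. The first is the direct-sum formula $[\mcQ_{M'\oplus M''}]=f^*[\mcQ_{M'}]+g^*[\mcQ_{M''}]$. It should follow from the fixed-point description in \cite[Prop.~4.4]{Eur_Huh_Larson_2023}, as in the paper's closing remark on loops. The second is non-proportionality of the summands:
\begin{enumerate}
\item $\Sigma_E$ refines $\Sigma_{E'}\times\Sigma_{E''}$, so $h\colon X_E\to X_{E'}\times X_{E''}$ is proper birational and $h^*$ is injective.
\item The summands are $h^*$ of $c_i(\mcQ_{M'})\otimes c_{k-i}(\mcQ_{M''})$.
\item These are non-zero for $0\le i\le\crk(M')$ and $0\le k-i\le\crk(M'')$ by Proposition~\ref{prop:Comb_Descr_MW_of_taut_classes_stella}, and they have distinct bidegrees.
\end{enumerate}
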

Unless stated otherwise, we will assume $M$ to be loopless from now on. We will write $\Sigma_M^k$ for the fan $\Sigma_{\Delta_M^k}$ whose maximal cones are precisely those that are in the support of $\Delta_M^k$. By using notations from the previous chapter and Proposition~\ref{prop:Comb_Descr_MW_of_taut_classes_stella}, this precisely means
\begin{align} \label{eq:Sigma_M^k(n-k)_stella}
    \Sigma_M^k(0)=\left\{I\leq\mcF \,\middle|\,
    F_0\subset\cl_M(I) \text{ and } \mcF\text{ is empty or } \mcF/F_0\in\uSigma_{M/F_0}^{k+|I|-|F_0|}(0)\right\}.
\end{align}
For a compatible pair $(I\leq\mcF)\in\Sigma_M^k(1)$, let us consider the sets
\begin{align*}
    S_M^k(I\leq\mcF)&\coloneqq\{S\subsetneq F_0 \mid I\leq(S\subsetneq\mcF)\in\Sigma_M^k(0)\},\\
    E_M^k(I\leq\mcF)&\coloneqq\{e\in F_0\sm I \mid I\cup\{e\}\leq\mcF\in\Sigma_M^k(0)\}.
\end{align*}
Note that, if $\mcF$ is non-empty and $\mcF/F_0\notin\uSigma_{M/F_0}^{k+|I|-|F_0|}(0)$, there is $i\in\gaps(\mcF)$ such that $M|F_{i+1}/F_i$ is not uniform of rank one and hence both of the above sets must be empty in this case.

\begin{lemma}\label{lem:S_M^k_and_E_M^k}
    Let $M$ be a matroid on $E$ and $1\leq k\leq\crk(M)$. Suppose $I\leq\mcF\in\Sigma_M^k(1)$ is such that $\mcF$ is empty or $\mcF/F_0\in\uSigma_{M/F_0}^{k+|I|-|F_0|}(0)$. Then, there holds:
    \begin{align*}
        S_M^k(I\leq\mcF)=\begin{cases}
            \{F_0\sm\{e\}\mid e\in F_0\sm I\} & \text{if }F_0\subset\cl_M(I)\\
            \{\cl_{M|F_0}(I)\} & \text{otherwise}
        \end{cases}
    \end{align*}
    and
    \begin{align*}
        E_M^k(I\leq\mcF)=\begin{cases}
            F_0\sm I & \text{if }F_0\subset\cl_M(I)\\
            F_0\sm\cl_M(I) & \text{otherwise}
        \end{cases}.
    \end{align*}
\begin{proof}
    For $S\in S_M^k(I\leq\mcF)$, the minor $N\coloneqq M|F_0/S$ is a loop or uniform of rank one. If $F_0\subset\cl_M(I)$, then $N$ has to be a loop, so $S=F_0\sm\{e\}$ for some $e\in F_0\sm I$. By the assumptions made in the claim, every set of this form does indeed lie in $S_M^k(I\leq\mcF)$. If $F_0\not\subset\cl_M(I)$, the minor $N$ is uniform of rank one. This implies $S=\cl_{M|F_0}(I)$, as, if either inclusion fails, $N$ would have loops. It is obvious that $\cl_{M|F_0}(I)\in S_M^k(I\leq\mcF)$, which completes the proof of the first part.\\
    For $e\in F_0\sm I$, we have $e\in E_M^k(I\leq\mcF)$ if and only if $F_0\subset\cl_M(I\cup\{e\})$. If $F_0\subset\cl_M(I)$, this yields $E_M^k(I\leq\mcF)=F_0\sm I$. If $F_0\not\subset\cl_M(I)$, we we see from $I\leq\mcF\in\Sigma_M^k(1)$ that $\rk_M(I)=\rk_M(F_0)-1$ and thus  $E_M^k(I\leq\mcF)=F_0\sm\cl_M(I)$.
\end{proof}
\end{lemma}

\begin{definition} \label{def:approx_relation}
    Let $M$ be matroid on $E$ and let $1\leq k\leq\crk(M)$.
    Let $\approx_M^k$ be the smallest equivalence relation on $\Sigma_M^k(0)$ such that, for all compatible pairs $I\leq\mcF\in\Sigma_M^k(1)$ with $\mcF$ empty or $\mcF/F_0\in\uSigma_{M/F_0}^{k+|I|-|F_0|}(0)$ and for all $e\in E_M^k(I\leq\mcF)$ and $S\in S_M^k(I\leq\mcF)$, we have 
    \begin{align*}
    I\cup\{e\}\leq\mcF\approx_M^k I\leq(S\subsetneq\mcF).
    \end{align*}
    If no confusion is likely, we will suppress the indices and simply write $\approx$ instead of $\approx_M^k$.
\end{definition}

\begin{proposition}\label{prop:bal_cond_implies_loc_const_stella}
    Let $M$ be a matroid on $E$ and $1\leq k\leq\crk(M)$. Then, any Minkowski weight from $\MW^k(\Sigma_M^k)$ is constant on $\approx$-equivalence classes.
\begin{proof}
    The balancing condition for $\Delta\in\MW^k(\Sigma_M^k)$ at a compatible pair $I\leq\mcF\in\Sigma_M^k(1)$ with $\mcF$ empty or $\mcF/F_0\in\uSigma_{M/F_0}^{k+|I|-|F_0|}(0)$ states that
    \begin{align*}
        \sum_{(J\leq\mcG)\succneq (I\leq\mcF)}\Delta(J\leq\mcG)(\mathbf{e}_{J\sm I}-\mathbf{e}_{E\sm(\mcG\sm\mcF)})\in\Span(\sigma_{I\leq\mcF}).
    \end{align*}
    If $F_0\subset\cl_M(I)$, this expression becomes
    \begin{align*}
        \Bigg(\sum_{i\in F_0\sm I}\bigl(\Delta(I\cup\{i\}\leq\mcF)-\Delta(I\leq(F_0\sm\{i\}\subsetneq\mcF))\bigr)\mathbf{e}_i\Bigg) + \text{terms supported in $E\sm F_0$}.
    \end{align*}
    Since $\Span(\sigma_{I\leq\mcF})$ is spanned by vectors with support in $I\cup(E\sm F_0)$, the left summand has to be zero. Thus, we obtain
    \begin{align*}
        \Delta(I\cup\{i\}\leq\mcF)=\Delta(I\leq(F_0\sm\{i\}\subsetneq\mcF)) \quad\text{for all $i\in F_0\sm I$}.
    \end{align*}
    If $F_0\not\subset\cl_M(I)$, the balancing condition implies, using $\mathbf{e}_{E\sm F_0}\in\Span(\sigma_{I\leq\mcF})$, that
    \begin{align*}
        \Bigg(\sum_{i\in F_0\sm\cl_M(I)}\Delta(I\cup\{i\}\leq\mcF)\mathbf{e}_i\Bigg)-\Delta(I\leq(\cl_{M|F_0}(I)\subsetneq\mcF))\mathbf{e}_{F_0\sm\cl_M(I)} \in \Span(\sigma_{I\leq\mcF}).
    \end{align*}
    By the same argument as before, we find that the above term is zero. In view of our definition of the relation~$\approx$, this completes the proof.
\end{proof}
\end{proposition}

\begin{lemma} \label{lem:connectedness_approx}
    Let $M$ be a matroid on $E$ and suppose $1\leq k\leq\crk(M)$. Then every compatible pair $J\leq\mcG$ from $\Sigma_M^k(0)$ is $\approx$-equivalent to another compatible pair from $\Sigma_M^k(0)$ of the form $\emptyset\leq(\emptyset\subsetneq\mcG')$.
\begin{proof}
    Note that there is nothing to prove if $J$ is empty, as this implies $G_0\subset\cl_M(\emptyset)=\emptyset$ using our standing assumption of $M$ being loopless. Thus, let $J$ be non-empty and pick $e\in J$. By applying Lemma~\ref{lem:S_M^k_and_E_M^k} to $J\sm\{x\}\leq\mcG$, we find in either of the two cases some $S\subsetneq G_0$ such that $J\sm\{e\}\leq(S\subsetneq\mcG)\in\Sigma_M^k(0)$ is $\approx$-equivalent to $J\leq\mcG$. Proceeding inductively yields the claim.
\end{proof}
\end{lemma}

\begin{lemma} \label{lem:perm_is_star_in_stella}
    Let $M$ be a matroid on $E$ and $1\leq k\leq\crk(M)$. We have
    \begin{align*}
        \mcG\in\uSigma_M^k(0) \iff\emptyset\leq(\emptyset\subsetneq\mcG)\in\Sigma_M^k(0).
    \end{align*}
    Hence, if $\operatorname{star}_{\emptyset\leq(\emptyset)}(\Sigma_M^k)$ is the subfan of $\Sigma_M^k$ consisting of cones containing the ray $\mathbf{e}_E$ corresponding to $\emptyset\leq(\emptyset)$, then one can recover $\uSigma_M^k$ from it by quotienting by $\Span(\mathbf{e}_E)$.
\begin{proof}
    This is immediate from \eqref{eq:Sigma_M^k(n-k)} and \eqref{eq:Sigma_M^k(n-k)_stella}.
\end{proof}
\end{lemma}

\begin{proof}[Proof of Theorem~\ref{thm:main_result_stella}]
    Note that $\torstar_{\emptyset\leq(\emptyset)}(\Sigma_E)$ is a subfan of $\Sigma_E$ and the inclusion induces a surjective pullback map $\MW^\bullet(\Sigma_E)\to\MW^\bullet(\torstar_{\emptyset\leq(\emptyset)}(\Sigma_E))$ that sends a Minkowski weight on $\Sigma_E$ to its restriction to $\torstar_{\emptyset\leq(\emptyset)}(\Sigma_E)$. By Lemma~\ref{lem:perm_is_star_in_stella}, the space $\MW^k(\torstar_{\emptyset\leq(\emptyset)}(\Sigma_E))$ is isomorphic to $\MW^k(\uSigma_E)$ and under this identification the above pullback sends weights supported in $\Sigma_M^k(0)$ to weights supported in $\uSigma_M^k(0)$. Thus, we have the following commutative diagram:
    \begin{figure}[h]
        \centering
        \[\begin{tikzcd}
        	{\MW^k(\Sigma_E)} & {\MW^k(\torstar_{\emptyset\leq(\emptyset)}(\Sigma_E))} &
            {\MW^k(\uSigma_E)} & {} \\
        	{\MW^k(\Sigma_M^k)} && {\MW^k(\uSigma_M^k)}
        	\arrow[two heads, from=1-1, to=1-2]
            \arrow["{\sim}", from=1-2, to=1-3]
        	\arrow[hook, from=2-1, to=1-1]
        	\arrow[from=2-1, to=2-3]
        	\arrow[hook, from=2-3, to=1-3]
        \end{tikzcd}\]
    \end{figure}\\
    The combination of Proposition~\ref{prop:bal_cond_implies_loc_const_stella} and Lemma~\ref{lem:connectedness_approx} shows that the lower map is an isomorphism. In view of Proposition~\ref{prop:char_nef_rays}, this proves the equivalence of the conditions from Theorem~\ref{thm:main_result} and the extremality of $c_k(\mcQ_M)$ in $\Nef^k(X_E)$.
\end{proof}

\begin{corollary}
    Let $M$ be a loopless matroid on $E$. Then its augmented Bergman class $\Delta_M\cap[X_E]$ generates an extremal ray of $\Nef^{\crk(M)}(X_E)$.
\begin{proof}
    By \cite[Cor.~5.11]{Eur_Huh_Larson_2023}, the augmented Bergman class of $M$ is equal to $c_{\crk(M)}(\mcQ_M)$. Using our notation, we thus have $\Delta_M=\Delta_M^{\crk(M)}$. Hence, the claim follows from the extremality of the non-augmented Bergman class of $M$ by the exact same argument we used in the previous proof.
\end{proof}
\end{corollary}

\begin{lemma}
     Let $f:X_{E\cup\{e\}}\to X_{E}$ be the toric morphism induced from the projection $\RR^{E\cup\{e\}}\to\RR^E$. Then, the subspace $f^*\CH^k(X_E)$ of $\CH^k(X_{E\cup\{e\}})$ is the orthogonal complement with respect to the degree pairing of the space spanned by $x_{I\leq\mcF}$, where $I\leq\mcF$ ranges over those compatible pairs for which $e\notin I$ and $e$ is contained in a gap of $\mcF$.
\begin{proof}
    We can argue as in Lemma~\ref{lem:pullback_subspace}: Since the dimension of $V(\sigma_{I\leq\mcF})$ drops after taking the image under $f$ for every $I\leq\mcF$ as in the claim, we see that every element of $f^*\CH^k(X_E)$ is indeed orthogonal to $x_{I\leq\mcF}$.\\
    For the other direction, suppose that $y\in\CH^k(X_{E\cup\{e\}})$ is orthogonal to all $x_{I\leq\mcF}$ as in the claim.
    In terms of Minkowski weights, this precisely means that no pair $I\leq\mcF$ with $e\notin I$ and $\mcF$ having a gap containing $e$ is in the support of $\Delta=y\cap\Delta_{\Sigma_E}$. Suppose that $J\leq\mcG\in\Sigma_\Delta(1)$ with $n\notin J$ and $n\in G_0$. The balancing condition at $J\leq\mcG$ states that
    \begin{align*}
        \Delta\bigl(J\cup\{e\}\leq\mcG\bigr)\mathbf{e}_e - \Delta\bigl(J\leq (G_0\sm\{e\}\subsetneq\mcG)\bigr)\mathbf{e}_{(E\sm G_0)\cup\{e\}}
        \in\Span(\sigma_\mcG).
    \end{align*}
    This forces the two appearing values of $\Delta$ to be equal. In view of Lemma~\ref{lem:pullback_subspace}, we obtain that, for $I\leq\mcF,I'\leq\mcF'\in\Sigma_\Delta(0)$ with $(I\sm\{e\}\leq\mcF\sm e)=(I'\sm\{e\}\leq\mcF'\sm e)$, the two values $\Delta(I\leq\mcF)$ and $\Delta(I'\leq\mcF')$ are equal. Thus, $\Delta$ is the pullback of the Minkowski weight on $\Sigma_E$ that assigns to a pair $K\leq\mcH$ the value of $\Delta$ at any pair $I\leq\mcF$ with $(I\sm\{e\}\leq\mcF\sm e)=(J\leq\mcH)$.
\end{proof}
\end{lemma}

Precisely as in Corollary~\ref{cor:lifting_property_of_nef}, this implies:
\begin{corollary} \label{cor:lifting_property_of_nef_stella}
    Let $E'\subsetneq E$ and $f:X_E\to X_{E'}$ be the toric morphism induced from the projection $\RR^E\to\RR^{E'}$. For $x\in\CH^k(X_{E'})$, we have that $x$ is nef if and only if the pullback $f^*x$ is nef. The cone $f^*(\Nef^k(X_{E'}))$ is a face of $\Nef^k(X_E)$ and $x$ generates an extremal ray of $\Nef^k(X_{E'})$ if and only if $f^*x$ generates an extremal ray of $\Nef^k(X_E)$.
\end{corollary}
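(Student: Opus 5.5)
The plan is to copy the proof of Corollary~\ref{cor:lifting_property_of_nef}, with the previous lemma playing the role of Lemma~\ref{lem:pullback_subspace}. Writing $E=E'\cup\{e_1,\dots,e_r\}$, the morphism $f$ factors as a composition of projections that each forget one element. Every property in the statement is stable under composition: pullback of nef is nef, and a face of a face is a face. So it suffices to treat $E=E'\cup\{e\}$.

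\emph{Nefness.} If $x$ is nef, then $f^*x$ is nef, since pullbacks of nef classes along proper morphisms are nef by the projection formula. Conversely, suppose $f^*x$ is nef. By \cite{Fulton_MacPherson_Sottile_Sturmfels_1995}, every effective class on $X_{E'}$ is a nonnegative combination of torus-invariant classes $[V(\sigma)]$. Each such class is a pushforward of an effective torus-invariant class on $X_E$. To see this, $f$ is a surjective toric morphism of complete toric varieties whose fan map $\RR^E\to\RR^{E'}$ is surjective, so for each cone $\sigma$ of $\Sigma_{E'}$ we can choose a cone $\tau$ of $\Sigma_E$ of the same dimension mapping onto $\sigma$. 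Concretely, we take $\tau=\sigma_{I\leq\mcF}$ with the same $I$ and with $e$ added to every set of the flag, as in the pullback description. Then $f$ restricts to a birational map $V(\tau)\to V(\sigma)$, so $f_*[V(\tau)]=[V(\sigma)]$. The projection formula then gives $\int x\cdot[V(\sigma)]=\int f^*x\cdot[V(\tau)]\ge0$. This step is the only one needing care, namely checking that the chosen cone really maps isomorphically onto $\sigma$ (the lattice index is $1$ since both are smooth and the generators map to generators). I expect it to be routine from the explicit description of the cones $\sigma_{I\leq\mcF}$.

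\emph{Faces and extremality.} The previous step gives
\begin{align*}
f^*\Nef^k(X_{E'})=f^*\CH^k(X_{E'})\cap\Nef^k(X_E).
\end{align*}
By the preceding lemma, $f^*\CH^k(X_{E'})$ is the common zero set of the linear functionals $\xi\mapsto\int\xi\cdot x_{I\leq\mcF}$, where $I\leq\mcF$ ranges over the compatible pairs with $e\notin I$ and $e$ in a gap of $\mcF$. By \eqref{eq:nef_for_toric}, each of these functionals is nonnegative on $\Nef^k(X_E)$. Hence the intersection of $\Nef^k(X_E)$ with their joint kernel is a face, so $f^*\Nef^k(X_{E'})$ is a face of $\Nef^k(X_E)$.

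Finally, $f^*$ is injective, which is again read off from the lemma or from $f_*f^*=\mathrm{id}$ for a birational-fibre-degree-one map. It is therefore a linear isomorphism of $\Nef^k(X_{E'})$ onto this face. Extremal rays of a face are exactly the extremal rays of the ambient cone that lie in the face, so $x$ spans an extremal ray of $\Nef^k(X_{E'})$ if and only if $f^*x$ spans one of $\Nef^k(X_E)$.
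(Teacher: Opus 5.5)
Your outline is the paper's own argument: the paper simply says the argument of Corollary~\ref{cor:lifting_property_of_nef} carries over using the stellahedral pullback lemma. Your face and extremality steps are fine. However, the one step you make explicit, the lifting of torus-invariant cycles, fails as written.

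\textbf{The dimension count is off.} Since $\dim X_E=|E|=\dim X_{E'}+1$, a cone $\tau\in\Sigma_E$ with $\dim\tau=\dim\sigma$ has $\dim V(\tau)=\dim V(\sigma)+1$. This has two consequences:
\begin{itemize}
\item $V(\tau)\to V(\sigma)$ has one-dimensional fibres, so $f_*[V(\tau)]=0$.
\item $[V(\tau)]$ is a $(k+1)$-cycle, so $\int f^*x\cdot[V(\tau)]$ is not the pairing you need.
\end{itemize}
Your lattice-index check is made on the wrong map. Write $N=\ZZ^E$ and $N'=\ZZ^{E'}$, and let $N_\tau$, $N'_\sigma$ be the sublattices spanned by the cones. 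Birationality of $V(\tau)\to V(\sigma)$ requires $N/N_\tau\to N'/N'_\sigma$ to be an isomorphism. That forces $\ker\pi=\ZZ\mathbf{e}_e\subset N_\tau$, hence $\dim\tau=\dim\sigma+1$. In fact your $\tau$ (same $I$, with $e$ added to every set of the flag) is a cone whose orbit closure loses dimension under $f$. That is exactly the kind of cone the preceding lemma uses to cut out the image of $f^*$.

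\textbf{The repair.} Put $e$ into $I$ as well: take $\tau=\sigma_{I\cup\{e\}\leq\mcF'}$ with $\mcF'=\bigl(F_0\cup\{e\}\subsetneq\cdots\subsetneq F_l\cup\{e\}\bigr)$, or $\tau=\sigma_{I\cup\{e\}\leq\emptyset}$ if the flag is empty. Its rays are $\mathbf{e}_e$ together with $\mathbf{e}_i$ for $i\in I$ and $-\mathbf{e}_{E\sm(F_j\cup\{e\})}=-\mathbf{e}_{E'\sm F_j}$, which lift the rays of $\sigma$. Hence $\pi(\tau)=\sigma$ and $N_\tau=\pi^{-1}(N'_\sigma)$. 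So the orbit $O(\tau)$ maps isomorphically onto $O(\sigma)$, giving $f_*[V(\tau)]=[V(\sigma)]$. With this $\tau$ your projection-formula argument goes through.

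\textbf{Injectivity.} Your justification of injectivity of $f^*$ via $f_*f^*=\mathrm{id}$ is wrong for the same reason. The map $f$ has relative dimension one, so $f_*[X_E]=0$. Injectivity instead follows from the corrected lift: $\int f^*x\cdot[V(\tau)]=\int x\cdot[V(\sigma)]$, and the classes $[V(\sigma)]$ detect $x$ by Poincar\'e duality. Equivalently, the Minkowski weight of $f^*x$ at $\tau$ equals that of $x$ at $\sigma$.
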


\begin{remark}
    In contrast to the non-augmented case, if $L$ consists of a loop $l$, then the class $c_k(\mcQ_{M\oplus L})$ cannot be extremal in $\Nef^k(X_E)$ for any matroid $M$ on $E$: If $f:X_{E\cup\{l\}}\to X_E$ is the toric morphism induced from the projection of lattices, it is clear from \cite[Prop.~4.4]{Eur_Huh_Larson_2023} that there holds 
    \begin{align*}
        [\mcQ_{M\oplus L}]= f^*[\mcQ_M]+[\mathcal{L}]
    \end{align*}
    in the equivariant $K$-ring $K_T^0(X_{E\cup\{l\}})$ where $[\mathcal L]_{I\leq\mcF}$ is $1$ if if $l\in I$ and $T_l^{-1}$ otherwise. Since the piecewise linear function corresponding to $[\mathcal{L}]$, which is $0$ on $\sigma_{I\leq\mcF}$ if $l\in I$ and $-t_l$ otherwise, is convex, the decomposition 
    \begin{align*}
        c_k(\mcQ_{M\oplus L})= f^*c_k(\mcQ_M)+f^*c_{k-1}(\mcQ_M)c_1(\mathcal L).
    \end{align*}
    shows that $c_k(\mcQ_{M\oplus L})$ is not extremal.
\end{remark}

\section{Future directions}
By the fan displacement rule from \cite{Fulton_Sturmfels_1997}, it is clear that products of non-negative Minkowski weights on complete fans are again non-negative. Hence, products of nef Chow classes on complete toric varieties are again nef. While the product of extremal nef classes need not be extremal (see \cite[Exp.~5.9]{Huh_2016} for an example on $\uX_{[3]}$), computer experiments suggest that, for $n$ small, many extremal rays arise as products of tautological classes corresponding to possibly different matroids. This leads to the following question:
\begin{question}
    What are necessary and sufficient conditions on matroids $M_1,\dots,M_s$ and integers $r\leq s$,  $k_1,\dots,k_s$ for the product
    \begin{align*}
        c_{k_1}(\umcQ_{M_1})\cdots c_{k_r}(\umcQ_{M_r})c_{k_{r+1}}(\umcS_{M_{r+1}}^\vee)\cdots c_{k_s}(\umcS_{M_s}^\vee)
    \end{align*}
     to generate an extremal ray of the nef cone of the permutohedral variety?
\end{question}
A known instance of this question is the case where the factors are tautological top Chern classes. By \cite[Prop.~4.4]{Speyer_2008}, one has for loopless matroids $M$ and $M'$ that the product of their Bergman classes $c_{\crk(M)}(\umcQ_M)c_{\crk(M')}(\umcQ_{M'})$ is the Bergman class $c_{\crk(M)+\crk(M')}(M\wedge M')$, where $M\wedge M'$ is the matroid intersection of $M$ and $M'$. Thus, this construction cannot yield any new extremal rays.\\
If one specializes the above question to the case of products of tautological classes of a single matroid, extremality becomes quite rare. This is because such products will often decompose as a sum of \textit{Schur classes}, which are evaluations of Schur polynomials indexed by Young diagrams in tautological Chern classes, see \cite{Berget_Fink_2021} and \cite{Berget_Eur_Spink_Tseng_2023}. In the representable case, such Schur classes are known to be nef as shown in~\cite{Fulton_Lazarsfeld_1983}. In the non-representable case, their nefness can be seen to be equivalent to a conjecture from \cite{Berget_Fink_2021}, which is widely open. A natural follow-up question to this observation is thus:
\begin{question}
    Provided a certain Schur class of a matroid is nef, what can be said about its extremality in the nef cone of the permutohedral variety?
\end{question}
Of course, this idea can again further be generalized by asking the same question for products of Schur classes of matroids.
Considering such products as potential candidates for new extremal rays also seems quite natural in view of definitions made in \cite{Fulger_Lehmann_2014}, where the authors constructed a subcone of the nef cone of any projective variety $X$, called the \textit{pliant cone}. By definition, it is precisely spanned by products of Schur classes of globally generated vector bundles on $X$.

\bibliography{biblio}
\bibliographystyle{amsalpha}

\end{document}